\theoremstyle{plain}
\newtheorem{thm}[subsection]{Theorem}
\newtheorem{lem}[subsection]{Lemma}
\newtheorem{cor}[subsection]{Corollary}
\theoremstyle{definition}
\newtheorem{rk}[subsection]{Remark}
\newtheorem{definition}[subsection]{Definition}
\numberwithin{equation}{section} \setcounter{tocdepth}{1}
\begin{document}
\title [\texttt{Bond Incident Degree (BID) Indices of Polyomino Chains: A Unified Approach}]{Bond Incident Degree (BID) Indices of Polyomino Chains: A Unified Approach}
\author{Akbar Ali$^{\dag,\ddag}$, Zahid Raza$^{\dag}$ and Akhlaq Ahmad Bhatti$^{\dag}$}
 \address{$^{\dag}$Department of Mathematics \\ National University of Computer and Emerging Sciences, B-Block, Faisal Town, Lahore, Pakistan.}
 \address{$^{\ddag}$Department of Mathematics \\ University Of Gujrat, Hafiz Hayat Campus, Gujrat, Pakistan.}
\email{akbarali.maths@gmail.com,zahid.raza@nu.edu.pk,akhlaq.ahmad@nu.edu.pk}

\subjclass[2010]{05C07, 05C35, 92E10}

\keywords{topological index, bond incident degree index, polyomino chain.}

\thanks{}

\begin{abstract}

This work is devoted to establish a general expression for calculating the bond incident degree (BID) indices of polyomino chains and to characterize the extremal polyomino chains with respect to several well known BID indices. From the derived results, all the results of [M. An, L. Xiong, Extremal polyomino chains with respect to general Randi\'{c} index, \textit{J. Comb. Optim.} (2014) DOI 10.1007/s10878-014-9781-6], [H. Deng, S. Balachandran, S. K. Ayyaswamy, Y. B. Venkatakrishnan, The harmonic indices of polyomino chains, \textit{Natl. Acad. Sci. Lett.} \textbf{37}(5), (2014) 451-455], [Z. Yarahmadi, A. R. Ashrafi and S. Moradi, Extremal polyomino chains with respect to Zagreb indices, \textit{Appl. Math. Lett.} \textbf{25} (2012) 166-171], and also some results of [J. Rada, The linear chain as an extremal value of VDB topological indices of polyomino chains, \textit{Appl. Math. Sci.} \textbf{8}, (2014) 5133-5143], [A. Ali, A. A. Bhatti, Z. Raza, Some vertex-degree-based topological indices of polyomino chains, \textit{J. Comput. Theor. Nanosci.} \textbf{12}(9), (2015) 2101-2107] are obtained as corollaries.

\end{abstract}
\maketitle
\section{Introduction}

According to the International Union of Pure and Applied Chemistry (IUPAC) Recommendations 1997 \cite{w1}, a \textit{topological index} is a numerical value associated with chemical constitution for correlation of chemical structure with various physical properties, chemical reactivity or biological activity. Nowadays, there are many topological indices that have found applications in chemistry \cite{d2}, in computational linguistics \cite{l1} and in computational biology \cite{d3}. A large number of such indices depend only on vertex degree of the molecular graph \cite{f1,c1} and are known as degree-based topological indices. In the present study, we are concerned with \textit{bond incident degree (BID) indices} (a subclass of degree-based topological indices) whose general form \cite{d4,f5,g1,r1,v2,VuDu11} is:
\begin{equation}\label{z}
TI=TI(G)=\displaystyle\sum_{uv\in E(G)}f(d_{u},d_{v})=\displaystyle\sum_{1\leq a\leq b\leq \Delta(G)}x_{a,b}(G).\theta_{a,b} \ ,
\end{equation}
where $uv$ is the edge connecting the vertices $u$ and $v$ of the graph $G$, $d_{u}$ is the degree of the vertex $u$, $E(G)$ is the edge set of $G$, $\Delta(G)$ is the maximum degree in $G$, $\theta_{a,b}$ is a non-negative real valued function depending on $a,b$ and $x_{a,b}(G)$ is the number of edges in $G$ connecting the vertices of degrees $a$ and $b$.
The (general) Randi$\acute{c}$ index \cite{b1,r3}, harmonic index \cite{f2}, (general) atom-bond connectivity index \cite{e3,x1}, (general) sum-connectivity index \cite{z2,z3}, first geometric-arithmetic index \cite{v1}, augmented Zagreb index \cite{f4}, Albertson \cite{a1} index and Zagreb indices \cite{e1,g22,g3,g4} are the special cases of (\ref{z}). Besides these, there are many other indices of the form (\ref{z}). Details about the BID indices can be found in the recent review \cite{g1}, papers \cite{Vu10,VuDu11,f5,g2} and references cited therein.

A polyomino system is a finite 2-connected plane graph such that each interior face is surrounded by a regular square of side length one. In a polyomino system, two squares are adjacent if they share an edge. For the history and details about polyomino system see for example \cite{g11,k1}. A polyomino system in which every square is adjacent with at most two squares is called a polyomino chain. The problem of characterizing the extremal polyomino chains with respect to BID indices over the set of all polyomino chains with fixed number of squares has attracted substantial attention from researchers in recent years. For instance, Yarahmadi \textit{et al.} \cite{z} determined extremal polyomino chains with respect to first and second Zagreb indices. Deng \textit{et al.} \cite{deng} characterized the extremal polyomino chains with respect to harmonic index. An and Xiong \cite{An2}, recently determined extremal polyomino chains for the general Randi\'{c} index. Rada \cite{r4} recently proved that the linear chain has the extremal value for many well known BID indices. In \cite{AA2,y2}, the same problem was addressed for some other BID indices. In this paper, an efficient closed form formula to calculate the BID indices of polyomino chains is given and by making use of this formula the extremal polyomino chains with respect to several BID indices are characterized, and thereby all the results reported in \cite{An2,deng,z} and also some results of \cite{y2,r4} are generalized.

\section{Main Results}
To establish the main results, we need some concepts for a polyomino chain. A square adjacent with only one (respectively two) other square(s) is called terminal (respectively non-terminal). By a kink, we mean a non-terminal square having a vertex of degree 2. A polyomino chain without kinks is called \textit{linear chain} (see the Figure \ref{fig:2}). A polyomino chain consisting of only kinks and terminal squares is known as zigzag chain (see the Figure \ref{fig:3}). A segment is a maximal linear chain in a polyomino chain, including the kinks and/or terminal squares at its ends. The number of squares in a segment $S$ is called its length and is denoted by $l(S)$. Two segments are adjacent if they share a square. For any segment $S$ of a polyomino chain with
$n\geq3$ squares, $2\leq l(S)\leq n$. It can be easily seen that\\ \\
\textit{(1).} A polyomino chain is linear if and only if it has only one segment. \\ \\
\textit{(2).} A polyomino chain with $n\geq3$ squares is zigzag if and only if it has $n-1$ segments.\\

\begin{figure}
  \centering
    \includegraphics[width=2.45in, height=0.55in]{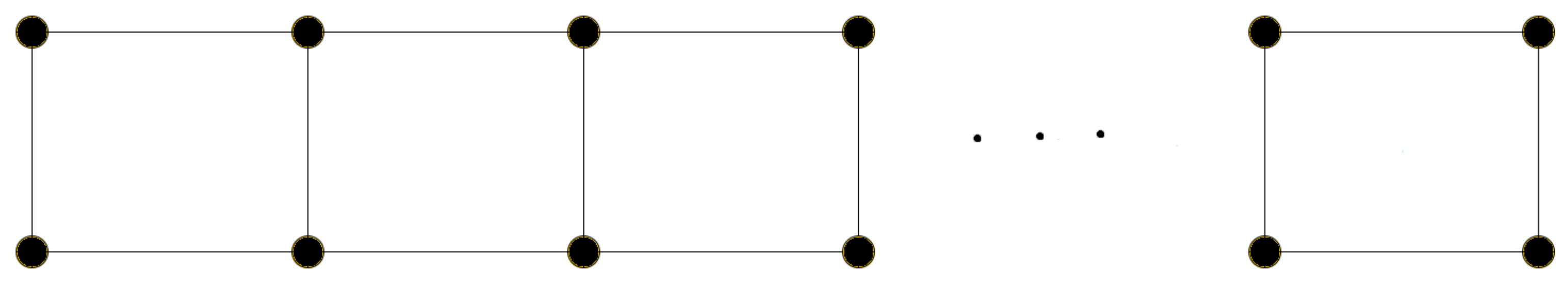}
    \caption{A linear polyomino chain}
    \label{fig:2}
      \end{figure}

\begin{figure}
    \centering
    \includegraphics[width=2.95in, height=2in]{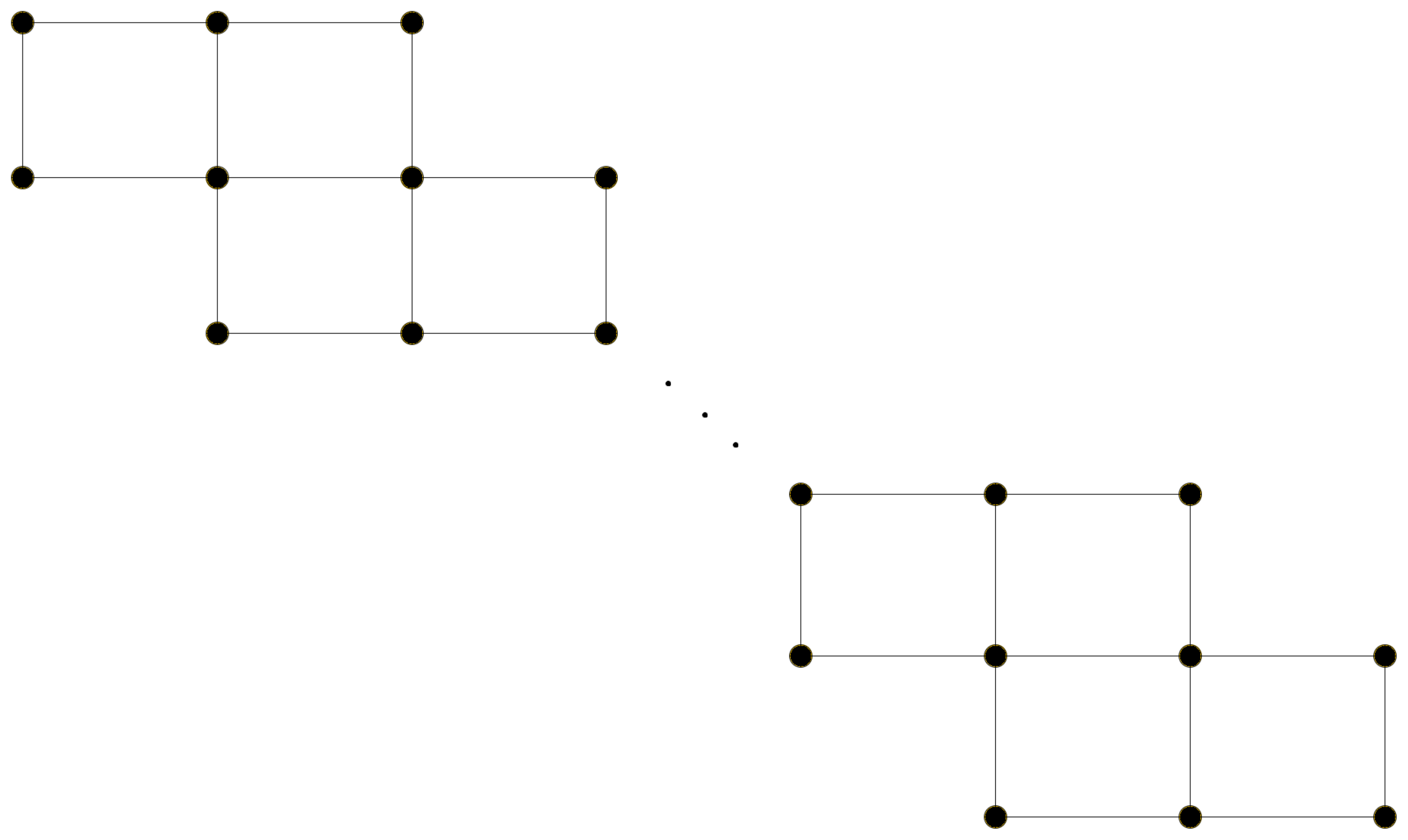}
    \caption{A zigzag polyomino chain}
    \label{fig:3}
     \end{figure}

A polyomino chain $B_{n}$ consists of a sequence of segments $S_{1}, S_{2},S_{3},...,S_{s}$ with lengths $l(S_{i})=l_{i}$ $(1\leq i\leq s)$ such that $\displaystyle\sum_{i=1}^{s}l_{i} = n + s - 1$. The vector $l=(l_{1},l_{2},...,l_{s})$ is called length vector. We also need the following definitions:
\begin{definition}\cite{z}
For $2\leq i\leq s-1$ and $1\leq j\leq s$,
$$\alpha_{i}=\alpha(S_{i})=
\begin{cases}
1 & \text{if } l(S_{i})=2\\
0 & \text{if } l(S_{i})\geq3
\end{cases}$$
$$\beta_{j}=\beta(S_{j})=
\begin{cases}
1 & \text{if } l(S_{j})=2\\
0 & \text{if } l(S_{j})\geq3
\end{cases}$$
and $\alpha_{1}=\alpha_{s}=0.$
\end{definition}

\begin{definition}
For $1\leq i\leq s$,
$$\tau_{i}=\tau(S_{i})=
\begin{cases}
1 & \text{ if } S_{i} \text{ is the internal segment containing an edge connecting }\\
  & \text{ the vertices of degree 3 and }l(S_{i})=3,\\
0 & \text{ otherwise. }
\end{cases}$$
\end{definition}

We call the vectors $\alpha=(\alpha_{1},\alpha_{2},...,\alpha_{s})$, $\beta=(\beta_{1},\beta_{2},...,\beta_{s})$, $\tau=(\tau_{1},\tau_{2},...,\tau_{s})$ as \textit{structural vectors}. Note that the structural vectors $\alpha=(\alpha_{1},\alpha_{2},...,\alpha_{s})$ and $\beta=(\beta_{1},\beta_{2},...,\beta_{s})$ can be obtained from the length vector $l=(l_{1},l_{2},...,l_{s})$. Now, we are in position to establish general expression for calculating the BID indices of polyomino chains.
\begin{thm}\label{t1}
Let $B_{n}$ be any polyomino chain having $n\geq3$ squares and $s$ segment(s) $S_{1}, S_{2},S_{3},...,S_{s}$ with the length vector $l=(l_{1},l_{2},...,l_{s})$ and structural vector $\tau=(\tau_{1},\tau_{2},...,\tau_{s})$. Then
\begin{eqnarray*}
TI(B_{n})&=&3n\theta_{3,3}+(2\theta_{2,3}-6\theta_{3,3}+4\theta_{3,4})s
+(2\theta_{2,2}+2\theta_{2,3}+\theta_{3,3}-4\theta_{3,4})\\
&&+ \ (\theta_{2,4}-\theta_{2,3}+\theta_{3,3}-\theta_{3,4})[\beta_{1}+\beta_{s}]+(\theta_{3,3}-2\theta_{3,4}+\theta_{4,4})\sum_{i=1}^{s}\tau_{i}\\
&&+ \ (2\theta_{2,4}-2\theta_{2,3}+3\theta_{3,3}-4\theta_{3,4}+\theta_{4,4})\sum_{i=1}^{s}\alpha_{i}.
\end{eqnarray*}
\end{thm}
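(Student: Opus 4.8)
The plan is to reduce the evaluation of $TI(B_n)$ to counting the edges of $B_n$ according to the degree pair of their endpoints, since by the right‑hand form of \eqref{z} only these counts $x_{a,b}(B_n)$ and the given weights $\theta_{a,b}$ enter. First I would record that every vertex of $B_n$ lies on one, two, or three of the unit squares and accordingly has degree $2$, $3$, or $4$ (a vertex on four squares would produce a $2\times 2$ block, impossible in a chain whose square‑adjacency graph is a path, while a vertex on exactly two squares meeting only there would be a cut vertex, contradicting $2$‑connectedness); moreover the degree‑$4$ vertices are exactly the inner corners of the kinks, and distinct kinks have distinct inner corners. Hence $\Delta(B_n)\le 4$ and $TI(B_n)=\sum_{2\le a\le b\le 4}x_{a,b}(B_n)\,\theta_{a,b}$, a sum of only six terms, so it suffices to find the six numbers $x_{2,2},x_{2,3},x_{2,4},x_{3,3},x_{3,4},x_{4,4}$.

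Second I would pin down the vertex–degree distribution. Since consecutive segments $S_i,S_{i+1}$ share exactly one square, which is a kink, the number of degree‑$4$ vertices is $n_4=s-1$. A routine induction on $n$ gives $|V(B_n)|=2n+2$ and $|E(B_n)|=3n+1$, so from $n_2+n_3+n_4=2n+2$ and $2n_2+3n_3+4n_4=2(3n+1)$ one gets $n_2=s+3$ and $n_3=2(n-s)$. Summing degrees over the vertices of each fixed degree then yields the three handshake identities
\[
2x_{2,2}+x_{2,3}+x_{2,4}=2(s+3),\qquad 2x_{3,3}+x_{2,3}+x_{3,4}=6(n-s),\qquad 2x_{4,4}+x_{2,4}+x_{3,4}=4(s-1),
\]
which, together with $\sum x_{a,b}=3n+1$, leave three free parameters.

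Third, and this is the main obstacle, I would fix three further quantities by a local analysis at the terminal squares and the kinks. (i) $x_{2,2}=2$: the only two pairs of adjacent degree‑$2$ vertices are the two far corners of each of the two terminal squares, every other degree‑$2$ vertex being the outer corner of a kink and having both neighbours of degree $\ge 3$. (ii) $x_{4,4}=\sum_i\alpha_i+\sum_i\tau_i$: a $(4,4)$‑edge must join the inner corners of two consecutive kinks, which lie in a common segment $S_i$; inspecting the local geometry shows this occurs exactly once when $l(S_i)=2$ (the staircase configuration being forced there), and, when $l(S_i)=3$, exactly when $S_i$ is internal and carries a $(3,3)$‑edge, i.e. when $\tau_i=1$, while for $l(S_i)\ge 4$ the two inner corners are too far apart. (iii) $x_{2,4}=(\beta_1+\beta_s)+2\sum_i\alpha_i$: every $(2,4)$‑edge is incident to the inner corner $w$ of a kink $K$; of the four edges at $w$, the two along the shared edges of $K$ end at vertices of degree $\ge 3$, while each of the other two reaches a degree‑$2$ vertex precisely when the segment on that side of $K$ has length $2$, contributing $1$ when that segment is $S_1$ or $S_s$ (its neighbour being a terminal square) and $2$ altogether when it is an internal length‑$2$ segment (both of its kinks then seeing the outer corner of the other). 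The case distinctions forced here — straight interior squares versus kinks, terminal versus internal length‑$2$ segments, and the two orientations of an internal length‑$3$ segment that $\tau$ records — are exactly what make the vectors $\beta$, $\tau$, $\alpha$ appear in the statement; the delicate point is to verify that these cases are exhaustive and that a polyomino chain never touches itself.

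Finally I would solve the linear system: combining (i)–(iii) with the handshake identities gives
\[
x_{2,3}=2s+2-(\beta_1+\beta_s)-2\textstyle\sum\alpha_i,\quad x_{3,4}=4s-4-(\beta_1+\beta_s)-4\textstyle\sum\alpha_i-2\textstyle\sum\tau_i,
\]
\[
x_{3,3}=3n-6s+1+(\beta_1+\beta_s)+3\textstyle\sum\alpha_i+\textstyle\sum\tau_i .
\]
Substituting all six counts into $TI(B_n)=\sum x_{a,b}\theta_{a,b}$ and collecting the coefficients of each $\theta_{a,b}$ according to the parameters $n$, $s$, $1$, $\beta_1+\beta_s$, $\sum\tau_i$, $\sum\alpha_i$ reproduces the asserted formula. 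As a sanity check, the linear chain ($s=1$, $\alpha=\beta=\tau=0$) and the smallest zigzag‑type chains recover the correct edge counts, which both guides and confirms the combinatorial bookkeeping above.
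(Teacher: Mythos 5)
Your proposal is correct, and it arrives at exactly the six counts the paper uses ($x_{2,2}=2$, $x_{2,4}=\beta_1+\beta_s+2\sum_i\alpha_i=\sum_i[\alpha_i+\beta_i]$, $x_{4,4}=\sum_i[\alpha_i+\tau_i]$, and the derived $x_{2,3},x_{3,4},x_{3,3}$), but by a genuinely different bookkeeping. The paper fixes a segment-by-segment partition of $E(B_n)$ into the sets $E_1(S_i)$ (edges cut by the dashed line through the centre of $S_i$) and $E_2(S_i)$ (the remaining ``bold'' edges, cf.\ Figure \ref{fig:c}), counts $x_{2,3}^{(r)}(S_i)$ and $x_{3,4}^{(r)}(S_i)$ case by case in terms of $\alpha,\beta,\tau$, states $x_{2,4}$ and $x_{4,4}$ ``in a similar way,'' and only uses the global identity $|E(B_n)|=3n+1$ to extract $x_{3,3}$. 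You instead determine the degree distribution $n_2=s+3$, $n_3=2(n-s)$, $n_4=s-1$ from $|V|=2n+2$, $|E|=3n+1$ and the fact that degree-$4$ vertices are precisely the $s-1$ kink corners, then use the three handshake identities per degree class (whose sum is the edge-count identity, so the system has rank $3$) together with local counts only of $x_{2,2}$, $x_{2,4}$, $x_{4,4}$, and solve linearly for $x_{2,3}$, $x_{3,4}$, $x_{3,3}$; I checked that this linear system reproduces the paper's values and that the final collection of coefficients matches the stated formula. What your route buys is that the delicate local inspection is confined to terminal squares and kinks (degree-$2$ and degree-$4$ vertices), avoiding the per-segment case analysis for $(2,3)$- and $(3,4)$-edges and the figure-dependent sets $E_1,E_2$; what the paper's route buys is explicit per-segment formulas for every $x_{a,b}$. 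Your geometric claims (the staircase being forced for internal length-$2$ segments, the $l=3$ dichotomy matching $\tau_i$, and no self-touching of the chain) are argued at essentially the same level of informality as the paper's own proof, and you at least flag the no-self-touching issue, which the paper leaves implicit.
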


\begin{proof}
For $s=1,2$ the result can be easily verified, so we assume that $s\geq3$. For $1\leq i\leq s$, suppose that $E_{1}(S_{i})$ is the set of those edges of the segment $S_{i}$ which are cut across by the straight dashed line passing through the centre of $S_{i}$ and let $E_{2}(S_{i})= \{\text{Bold edges of the segment} \ S_{i}\}=$ The set of all those edges of the segment $S_{i}$ which are not cut across by any straight dashed line (see the Figure \ref{fig:c}), then
$$E(B_{n})=\left(\bigcup_{i=1}^{s}E_{1}(S_{i})\right)\cup\left(\bigcup_{i=1}^{s}E_{2}(S_{i})\right).$$
It can be easily seen that $E_{1}(S_{1}), E_{1}(S_{2}),..., E_{1}(S_{s}), E_{2}(S_{1}), E_{2}(S_{2}),..., E_{s}(S_{s})$ are pairwise disjoint. Since $B_{n}$ contains only vertices of degree 2,3 and 4, hence from Equation (\ref{z}) it follows that

\begin{figure}
   \centering
    \includegraphics[width=3.5in, height=2.8in]{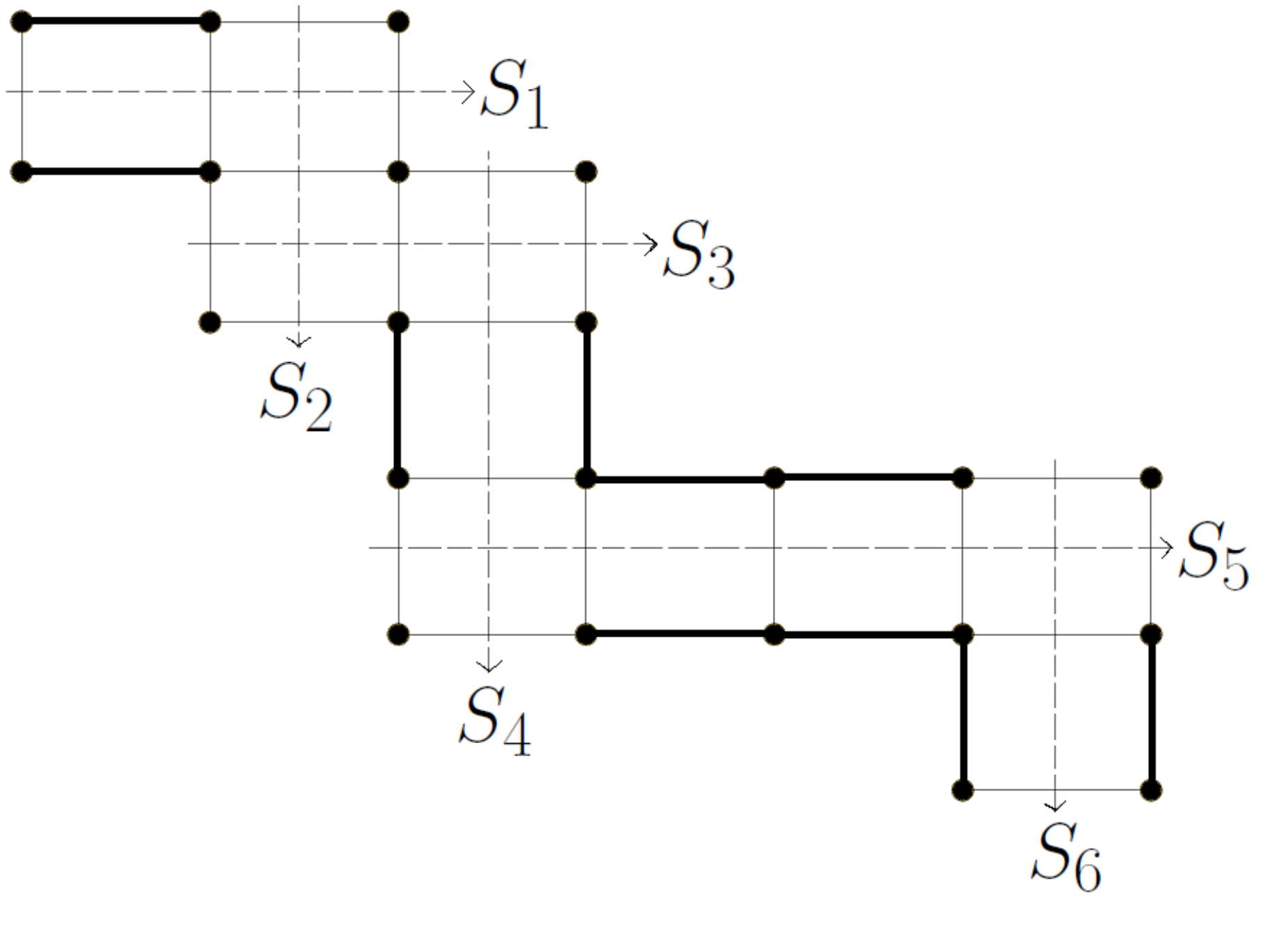}
    \caption{Partition of the edges of a polyomino chain}
    \label{fig:c}
      \end{figure}

\begin{equation}\label{y}
TI(B_{n})=\displaystyle\sum_{2\leq a\leq b\leq 4}x_{a,b}(B_{n}).\theta_{a,b} \ .
\end{equation}
Now, we calculate $x_{a,b}(B_{n})$ for $2\leq a\leq b\leq4$. It is easy to see that $x_{2,2}(B_{n})=2$. For $r=1,2$ and $1\leq i\leq s$, let $x_{a,b}^{(r)}(S_{i})$ is the number of those edges of the segment $S_{i}$ which connect the vertices of degrees $a,b$ and belong to the set $E_{r}(S_{i})$, then
$$x_{2,3}^{(1)}(S_{1})=1-\alpha_{2}, \  x_{2,3}^{(1)}(S_{s})=1-\alpha_{s-1}$$
and for $2\leq i\leq s-1$,
$$x_{2,3}^{(1)}(S_{i})=2-\alpha_{i-1}-\alpha_{i+1}.$$
Furthermore,
$$x_{2,3}^{(2)}(S_{1})=2-\beta_{1}, \ x_{2,3}^{(2)}(S_{s})=2-\beta_{s}\ \ \ \text{and for} \ \ \ 2\leq i\leq s-1, \ x_{2,3}^{(2)}(S_{i})=0.$$
Hence by summing the all $x_{2,3}^{(r)}(S_{i})$ over $r=1,2$ and $1\leq i\leq s$, one have
\begin{eqnarray*}
x_{2,3}(B_{n})&=&\sum_{i=1}^{s}\displaystyle\sum_{r=1}^{2}x_{2,3}^{(r)}(S_{i})\\
&=&2(s+1)-\beta_{1}-\beta_{s}-\sum_{i=1}^{s-1}\alpha_{i}-\sum_{i=2}^{s}\alpha_{i}\\
&=&2(s+1)-\sum_{i=1}^{s}[\alpha_{i}+\beta_{i}].
\end{eqnarray*}
Now, we evaluate $x_{3,4}(B_{n})$ as follows:
$$x_{3,4}^{(1)}(S_{1})=x_{3,4}^{(1)}(S_{s})=1 \ \text{and for} \ 2\leq i\leq s-1, \ x_{3,4}^{(1)}(S_{i})=2-2\beta_{i}.$$
Moreover,
$$x_{3,4}^{(2)}(S_{1})=1-\beta_{1}, \ x_{3,4}^{(2)}(S_{s})=1-\beta_{s} \ \text{and for} \ 2\leq i\leq s-1, \ x_{3,4}^{(2)}(S_{i})=2(1-\beta_{i}-\tau_{i}).$$
But,
\begin{eqnarray*}
x_{3,4}(B_{n})&=&\sum_{i=1}^{s}\displaystyle\sum_{r=1}^{2}x_{3,4}^{(r)}(S_{i})\\
&=&4-\beta_{1}-\beta_{s}+4\sum_{i=2}^{s-1}[1-\beta_{i}]-2\sum_{i=2}^{s-1}\tau_{i}\\
&=&4(s-1)+3\beta_{1}+3\beta_{s}-4\sum_{i=1}^{s}\beta_{i}-2\sum_{i=1}^{s}\tau_{i}.
\end{eqnarray*}
%By using similar technique, one have
In a similar way, one have
$$x_{2,4}(B_{n})=\sum_{i=1}^{s}[\alpha_{i}+\beta_{i}] \ \ \ \text{and} \ \ \ x_{4,4}(B_{n})=\sum_{i=1}^{s}[\alpha_{i}+\tau_{i}].$$
Lastly, the relation $\mid E(B_{n})\mid \ =\sum_{2\leq a\leq b\leq4}x_{a,b}(B_{n})=3n+1$ implies that
$$x_{3,3}(B_{n})=3n-6s+1+\beta_{1}+\beta_{s}+3\sum_{i=1}^{s}\alpha_{i}+\sum_{i=1}^{s}\tau_{i}.$$
After substituting the values of $x_{a,b}(B_{n})$ (where $2\leq a\leq b\leq4$) in Equation (\ref{y}), we arrive at the desired result.
\end{proof}
Since the linear chain $L_{n}$ and zigzag chain $Z_{n}$ has 1 and $n-1$ segment(s) respectively, the following corollary is a direct consequence of Theorem \ref{t1}.
\begin{cor}
Let $L_{n}$ and $Z_{n}$ be linear and zigzag chains respectively with $n\geq3$ squares. Then
$$TI(L_{n})=2\theta_{2,2}+4\theta_{2,3}+(3n-5)\theta_{3,3}$$
$$TI(Z_{n})=2\theta_{2,2}+4\theta_{2,3}+(2n-4)\theta_{2,4}+2\theta_{3,4}+(n-3)\theta_{4,4}.$$
\end{cor}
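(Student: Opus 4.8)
The plan is to obtain both identities by specializing Theorem \ref{t1}, so the work reduces to reading off the parameters $s$, $\beta_1+\beta_s$, $\sum_{i=1}^s\alpha_i$ and $\sum_{i=1}^s\tau_i$ for each of $L_n$ and $Z_n$ and then collecting $\theta$-coefficients. For the linear chain $L_n$ I would note that it consists of a single segment, so $s=1$, that this segment has length $l_1=n\geq 3$ so $\beta_1=0$, that there are no internal segments so $\sum_{i=1}^s\tau_i=0$, and that the sole entry of $\alpha$ is $\alpha_1=0$ so $\sum_{i=1}^s\alpha_i=0$. Substituting $s=1$ and these vanishing quantities into the formula of Theorem \ref{t1} leaves only $3n\theta_{3,3}+(2\theta_{2,3}-6\theta_{3,3}+4\theta_{3,4})+(2\theta_{2,2}+2\theta_{2,3}+\theta_{3,3}-4\theta_{3,4})$; here the $\theta_{3,4}$ terms cancel, the coefficient of $\theta_{3,3}$ is $3n-6+1=3n-5$, and one is left with $TI(L_n)=2\theta_{2,2}+4\theta_{2,3}+(3n-5)\theta_{3,3}$.

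For the zigzag chain $Z_n$ I would use the two structural facts noted above: $Z_n$ has $s=n-1$ segments, and the defining relation $\sum_{i=1}^s l_i=n+s-1=2(n-1)=2s$ together with $l_i\geq 2$ forces $l_i=2$ for every $i$. Hence each segment has length $2$, so $\beta_j=1$ for all $j$ (in particular $\beta_1+\beta_s=2$), $\alpha_i=1$ for $2\leq i\leq s-1$ so that $\sum_{i=1}^s\alpha_i=s-2=n-3$, and no internal segment has length $3$, so $\sum_{i=1}^s\tau_i=0$. Feeding $s=n-1$, $\beta_1+\beta_s=2$, $\sum_{i=1}^s\alpha_i=n-3$ and $\sum_{i=1}^s\tau_i=0$ into Theorem \ref{t1} and grouping terms, the coefficients come out to be $2$ for $\theta_{2,2}$, $4$ for $\theta_{2,3}$, $2n-4$ for $\theta_{2,4}$, $0$ for $\theta_{3,3}$, $2$ for $\theta_{3,4}$ and $n-3$ for $\theta_{4,4}$, which is precisely the asserted formula for $TI(Z_n)$.

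Since both cases amount to a substitution followed by an elementary collection of coefficients, I do not expect any genuine difficulty. The only two points meriting a line of justification are that the $s=1$ instance of Theorem \ref{t1} is valid for $L_n$ --- this is subsumed by the remark in the proof of that theorem that the cases $s=1,2$ are easily checked, and if desired one can instead verify $x_{2,2}(L_n)=2$, $x_{2,3}(L_n)=4$, $x_{3,3}(L_n)=3n-5$ directly from the $1\times n$ strip of squares --- and that a zigzag chain has all segment lengths equal to $2$, which, as noted, is immediate from the length identity $\sum_{i=1}^s l_i=n+s-1$ once $s=n-1$. The remaining bookkeeping of the $\theta$-coefficients is routine.
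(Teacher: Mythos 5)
Your proposal is correct and follows exactly the paper's route: the paper derives this corollary by noting that $L_n$ has $s=1$ and $Z_n$ has $s=n-1$ segments and substituting into Theorem \ref{t1}, which is precisely your specialization (with the same values $\beta_1+\beta_s$, $\sum\alpha_i$, $\sum\tau_i$) and the coefficient bookkeeping checks out.
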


Let us denote by $\Omega_{n}$ the collection of all those polyomino chains $B_{n}$ in which no internal segment of length three has edge connecting the vertices of degree three. The Equation (\ref{z}) gives the first Zagreb index $M_{1}$ (respectively second Zagreb index $M_{2}$) for $\theta_{a,b}=a+b$ (respectively $\theta_{a,b}=ab$). The following result reported by Yarahmadi \textit{et al.} \cite{z} follows from Theorem \ref{t1}.
\begin{cor}\cite{z}
Let $B_{n}\in\Omega_{n}$ be a polyomino chain having $n\geq3$ squares and $s$ segments $S_{1}, S_{2},S_{3},...,S_{s}$ with length vector $l=(l_{1},l_{2},...,l_{s})$. Then
$$M_{1}(B_{n})=18n+2s-4,$$
$$M_{2}(B_{n})=27n+6s-19-\sum_{i=1}^{s}\beta_{i}.$$
\end{cor}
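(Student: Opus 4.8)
The plan is to obtain both identities as immediate specializations of Theorem \ref{t1}, the only non-mechanical ingredient being a bookkeeping identity relating the structural vectors $\alpha$ and $\beta$. First I would observe that if $B_n\in\Omega_n$ then, by the very definition of $\Omega_n$, no internal segment of length three contains an edge joining two vertices of degree three, so $\tau_i=0$ for every $i$ and hence $\sum_{i=1}^s\tau_i=0$; this annihilates the $\tau$-term in the formula of Theorem \ref{t1}. The cases $s=1,2$ are covered directly (they are already handled separately in the proof of Theorem \ref{t1}), so one may assume $s\geq 3$.

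For the first Zagreb index I would substitute $\theta_{a,b}=a+b$, i.e. $\theta_{2,2}=4$, $\theta_{2,3}=5$, $\theta_{2,4}=6$, $\theta_{3,3}=6$, $\theta_{3,4}=7$, $\theta_{4,4}=8$, into the expression of Theorem \ref{t1}. A short arithmetic check shows that the coefficient $2\theta_{2,3}-6\theta_{3,3}+4\theta_{3,4}$ of $s$ collapses to $2$, the constant $2\theta_{2,2}+2\theta_{2,3}+\theta_{3,3}-4\theta_{3,4}$ collapses to $-4$, and the coefficients $\theta_{2,4}-\theta_{2,3}+\theta_{3,3}-\theta_{3,4}$ of $\beta_1+\beta_s$ and $2\theta_{2,4}-2\theta_{2,3}+3\theta_{3,3}-4\theta_{3,4}+\theta_{4,4}$ of $\sum\alpha_i$ both vanish, leaving $M_1(B_n)=18n+2s-4$. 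For the second Zagreb index I would substitute $\theta_{a,b}=ab$, i.e. $\theta_{2,2}=4$, $\theta_{2,3}=6$, $\theta_{2,4}=8$, $\theta_{3,3}=9$, $\theta_{3,4}=12$, $\theta_{4,4}=16$; now the coefficient of $s$ becomes $6$, the constant becomes $-19$, the coefficient of $\beta_1+\beta_s$ becomes $-1$, and the coefficient of $\sum\alpha_i$ becomes $-1$, so that $M_2(B_n)=27n+6s-19-(\beta_1+\beta_s)-\sum_{i=1}^s\alpha_i$.

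The one genuinely non-routine step — and the point I expect to be the main obstacle — is to recognize that $(\beta_1+\beta_s)+\sum_{i=1}^s\alpha_i=\sum_{i=1}^s\beta_i$. This follows from the definitions of the structural vectors: for $2\leq i\leq s-1$ one has $\alpha_i=\beta_i$ (each equals $1$ precisely when $l(S_i)=2$), while $\alpha_1=\alpha_s=0$; hence $\sum_{i=1}^s\alpha_i=\sum_{i=2}^{s-1}\beta_i$ and therefore $(\beta_1+\beta_s)+\sum_{i=1}^s\alpha_i=\sum_{i=1}^s\beta_i$. Substituting this back yields $M_2(B_n)=27n+6s-19-\sum_{i=1}^s\beta_i$, which completes the proof.
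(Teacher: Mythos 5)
Your proposal is correct and is essentially the paper's own (implicit) argument: the corollary is stated as a direct specialization of Theorem \ref{t1}, obtained by setting $\theta_{a,b}=a+b$ and $\theta_{a,b}=ab$, noting $\tau_i=0$ on $\Omega_n$, and your coefficient values ($\Theta_1=2$, $\Theta_2=\Theta_3=0$ for $M_1$; $\Theta_1=6$, $\Theta_2=\Theta_3=-1$ for $M_2$) match those the paper itself records later. The bookkeeping identity $(\beta_1+\beta_s)+\sum_{i=1}^{s}\alpha_i=\sum_{i=1}^{s}\beta_i$, which you justify from $\alpha_i=\beta_i$ for $2\leq i\leq s-1$ and $\alpha_1=\alpha_s=0$, is exactly the step needed to put $M_2$ in the stated form.
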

If we take $\theta_{a,b}=\mid a-b\mid$ in Equation (\ref{z}), then $TI$ is the Albertson index $A$ and hence we have another consequence of Theorem \ref{t1}:
\begin{cor}\cite{y2}
For $n\geq3$, let $B_{n}\in\Omega_{n}$ be a polyomino chain having $n$ squares and $s$ segments $S_{1}, S_{2},S_{3},...,S_{s}$ with length vector $l=(l_{1},l_{2},...,l_{s})$. Then
$$A(B_{n})=6s-2-2\displaystyle\sum_{i=1}^{s}\alpha_{i}.$$
\end{cor}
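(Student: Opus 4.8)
The plan is to specialize the master formula of Theorem~\ref{t1} to the function $\theta_{a,b}=|a-b|$, since this choice turns the sum in~(\ref{z}) into the Albertson index $A$. First I would record the values of $\theta_{a,b}$ for the pairs $(a,b)$ that can actually occur in a polyomino chain, whose vertex degrees all lie in $\{2,3,4\}$: one has $\theta_{2,2}=\theta_{3,3}=\theta_{4,4}=0$, $\theta_{2,3}=\theta_{3,4}=1$ and $\theta_{2,4}=2$.

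Next I would substitute these six numbers into the expression for $TI(B_n)$ supplied by Theorem~\ref{t1}, simplifying the coefficient of each structural term in turn. The coefficient $3\theta_{3,3}$ of $n$ is $0$; the coefficient $2\theta_{2,3}-6\theta_{3,3}+4\theta_{3,4}$ of $s$ equals $6$; the constant $2\theta_{2,2}+2\theta_{2,3}+\theta_{3,3}-4\theta_{3,4}$ equals $-2$; the coefficient $\theta_{2,4}-\theta_{2,3}+\theta_{3,3}-\theta_{3,4}$ of $\beta_{1}+\beta_{s}$ is $0$, so the boundary $\beta$-term drops out completely; the coefficient $\theta_{3,3}-2\theta_{3,4}+\theta_{4,4}$ of $\sum_{i}\tau_{i}$ is $-2$; and the coefficient $2\theta_{2,4}-2\theta_{2,3}+3\theta_{3,3}-4\theta_{3,4}+\theta_{4,4}$ of $\sum_{i}\alpha_{i}$ is $-2$. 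Gathering everything yields
\[
A(B_{n})=6s-2-2\sum_{i=1}^{s}\tau_{i}-2\sum_{i=1}^{s}\alpha_{i} .
\]

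Finally I would invoke the hypothesis $B_{n}\in\Omega_{n}$. By definition, $\Omega_{n}$ is precisely the class of polyomino chains in which no internal segment of length three contains an edge joining two vertices of degree $3$; equivalently $\tau_{i}=0$ for every $i$, so $\sum_{i=1}^{s}\tau_{i}=0$ and the displayed identity collapses to $A(B_{n})=6s-2-2\sum_{i=1}^{s}\alpha_{i}$, as claimed.

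There is really no obstacle to overcome: the proof is a direct substitution into Theorem~\ref{t1} together with an appeal to the definition of $\Omega_{n}$. The one point worth emphasizing is \emph{why} the hypothesis $B_{n}\in\Omega_{n}$ is needed --- without it the term $-2\sum_{i}\tau_{i}$ survives, and it is exactly the vanishing of all $\tau_{i}$ that produces the clean closed form. As a consistency check one may feed $\theta_{a,b}=|a-b|$ into the preceding corollary: for $L_{n}$ this gives $A(L_{n})=4$ (matching $s=1$, $\sum_{i}\alpha_{i}=0$), and for $Z_{n}$ it gives $A(Z_{n})=4n-2$ (matching $s=n-1$, $\sum_{i}\alpha_{i}=n-3$).
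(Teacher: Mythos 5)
Your proposal is correct and follows exactly the route the paper intends: the corollary is stated as a direct consequence of Theorem~\ref{t1}, obtained by substituting $\theta_{a,b}=|a-b|$ and using $B_{n}\in\Omega_{n}$ to kill the $\sum_{i}\tau_{i}$ term, which is precisely what you do (and your coefficient computations and consistency checks for $L_{n}$ and $Z_{n}$ are all accurate).
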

The choice $\theta_{a,b}=\frac{2}{a+b}$ in Equation (\ref{z}), corresponds to the harmonic index $H$. Recently, Deng \textit{et al.} \cite{deng} obtained the following result which can be deduced from Theorem \ref{t1}.
\begin{cor}\cite{deng}
If $B_{n}\in\Omega_{n}$ is a polyomino chain with $n\geq3$ squares and $s$ segments $S_{1}, S_{2},S_{3},...,S_{s}$ with lengths $l_{1},l_{2},...,l_{s}$ respectively. Then
\[H(B_{n})=
\begin{cases}
n-\frac{2}{35}s-\frac{11}{420}t+\frac{20}{21} & \text{if $l_{1}=l_{s}=2$,} \\
n-\frac{2}{35}s-\frac{11}{420}t+\frac{104}{105} & \text{if $l_{1},l_{s}>2$,} \\
n-\frac{2}{35}s-\frac{11}{420}t+\frac{34}{35} & \text{otherwise,}
\end{cases}\]
where $t$ is the number of segments of length two among $\{S_{2},S_{3},...,S_{s-1}\}$.
\end{cor}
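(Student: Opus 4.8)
The plan is to specialize Theorem~\ref{t1} to the function $\theta_{a,b}=\frac{2}{a+b}$, for which $TI$ becomes the harmonic index $H$, and then to simplify using the hypothesis $B_{n}\in\Omega_{n}$.

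First I would record the values $\theta_{2,2}=\frac12$, $\theta_{2,3}=\frac25$, $\theta_{3,3}=\theta_{2,4}=\frac13$, $\theta_{3,4}=\frac27$, $\theta_{4,4}=\frac14$. The condition $B_{n}\in\Omega_{n}$ says precisely that no internal segment of length three contains an edge joining two vertices of degree $3$, that is, $\tau_{i}=0$ for every $i$; hence the term $(\theta_{3,3}-2\theta_{3,4}+\theta_{4,4})\sum_{i}\tau_{i}$ in Theorem~\ref{t1} vanishes. Plugging the above values into the remaining terms (taking common denominators $105$ and $420$) gives $3\theta_{3,3}=1$ for the coefficient of $n$, $2\theta_{2,3}-6\theta_{3,3}+4\theta_{3,4}=-\frac{2}{35}$ for the coefficient of $s$, $2\theta_{2,4}-2\theta_{2,3}+3\theta_{3,3}-4\theta_{3,4}+\theta_{4,4}=-\frac{11}{420}$ for the coefficient of $\sum_{i=1}^{s}\alpha_{i}$, $\theta_{2,4}-\theta_{2,3}+\theta_{3,3}-\theta_{3,4}=-\frac{2}{105}$ for the coefficient of $\beta_{1}+\beta_{s}$, and $2\theta_{2,2}+2\theta_{2,3}+\theta_{3,3}-4\theta_{3,4}=\frac{104}{105}$ for the constant term. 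Thus Theorem~\ref{t1} yields
\[
H(B_{n})=n-\frac{2}{35}s-\frac{11}{420}\sum_{i=1}^{s}\alpha_{i}-\frac{2}{105}\bigl(\beta_{1}+\beta_{s}\bigr)+\frac{104}{105}.
\]

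Next I would rewrite $\sum_{i=1}^{s}\alpha_{i}$ in terms of $t$. Since $\alpha_{1}=\alpha_{s}=0$ and, for $2\le i\le s-1$, $\alpha_{i}=1$ exactly when $l(S_{i})=2$, the sum $\sum_{i=1}^{s}\alpha_{i}$ is precisely the number of segments of length two among $S_{2},\dots,S_{s-1}$, which is $t$. Substituting gives $H(B_{n})=n-\frac{2}{35}s-\frac{11}{420}t-\frac{2}{105}(\beta_{1}+\beta_{s})+\frac{104}{105}$.

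Finally I would split into the three stated cases according to the endpoint segments. If $l_{1}=l_{s}=2$ then $\beta_{1}+\beta_{s}=2$ and the last two terms combine to $-\frac{4}{105}+\frac{104}{105}=\frac{20}{21}$; if $l_{1},l_{s}>2$ then $\beta_{1}+\beta_{s}=0$ and they give $\frac{104}{105}$; in the remaining (``otherwise'') case $\beta_{1}+\beta_{s}=1$, giving $-\frac{2}{105}+\frac{104}{105}=\frac{34}{35}$. This is exactly the claimed piecewise formula. There is no genuine obstacle here: the only points requiring care are the fraction arithmetic and the two bookkeeping facts that $B_{n}\in\Omega_{n}$ forces $\tau_{i}\equiv0$ and that the interior entries of $\alpha$ and $\beta$ coincide, so that $\sum_{i}\alpha_{i}=t$.
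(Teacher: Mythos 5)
Your proposal is correct and follows exactly the route the paper intends: the corollary is obtained by substituting $\theta_{a,b}=\frac{2}{a+b}$ into Theorem~\ref{t1}, noting that $B_{n}\in\Omega_{n}$ forces $\sum_{i}\tau_{i}=0$ and that $\sum_{i}\alpha_{i}=t$, and then splitting on $\beta_{1}+\beta_{s}\in\{0,1,2\}$. Your fraction arithmetic (coefficients $1$, $-\tfrac{2}{35}$, $-\tfrac{11}{420}$, $-\tfrac{2}{105}$, constant $\tfrac{104}{105}$) checks out, so nothing further is needed.
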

The general Randi\'{c} index $R_{\gamma}$ can be obtained from Equation (\ref{z}) if one take $\theta_{a,b}=(ab)^{\gamma}$ where $\gamma$ is non zero real number. Very recently, An and Xiong \cite{An2} derived an efficient formula (given in Corollary \ref{ccr0}) to calculate the general Randi\'{c} index of polyomino chains. Bearing in mind the fact \[\sum_{i=2}^{s-1}\left[3+\beta_{i}\right]^{\gamma}
=\sum_{i=2}^{s-1}\left[3^{\gamma}+\left(4^{\gamma}-3^{\gamma}\right)\beta_{i}\right],\]
we can obtain the aforementioned formula from Theorem \ref{t1}.
\begin{cor}\label{ccr0}\cite{An2}
Let $B_{n}\in\Omega_{n}$ be a polyomino chain with $n\geq3$ squares and consisting of
$s$ segments $S_{1}, S_{2},S_{3},...,S_{s}$ with lengths $l_{1},l_{2},...,l_{s}$ respectively. Let $\gamma\geq1$ be an arbitrary real number. Then
\[R_{\gamma}(B_{n})=
\begin{cases}
A+\left(4.12^{\gamma}-6.9^{\gamma}\right)s-2.6^{\gamma} & \text{if $s=1$,} \\
A+\left(4.12^{\gamma}-6.9^{\gamma}\right)s+\left(9^{\gamma}+8^{\gamma}-12^{\gamma}-6^{\gamma}\right)\\
\times\left[\beta_{1}+\beta_{s}\right]+\left(16^{\gamma}+3.9^{\gamma}-4.12^{\gamma}\right)\sum_{i=2}^{s-1}\beta_{i}\\
2^{\gamma+1}\sum_{i=2}^{s-1}\left[3+\beta_{i}\right]^{\gamma} & \text{otherwise,}
\end{cases}\]
where $A=(3n+1).9^{\gamma}-4.12^{\gamma}+6^{\gamma+1}+2.4^{\gamma}.$
\end{cor}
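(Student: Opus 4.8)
The plan is to specialize Theorem~\ref{t1} to the weight $\theta_{a,b}=(ab)^{\gamma}$. This substitution gives $\theta_{2,2}=4^{\gamma}$, $\theta_{2,3}=6^{\gamma}$, $\theta_{2,4}=8^{\gamma}$, $\theta_{3,3}=9^{\gamma}$, $\theta_{3,4}=12^{\gamma}$ and $\theta_{4,4}=16^{\gamma}$, so that $TI(B_{n})=R_{\gamma}(B_{n})$. Since $B_{n}\in\Omega_{n}$, no internal segment of length three carries an edge joining two vertices of degree three, hence $\tau_{i}=0$ for every $i$ and the $\sum_{i}\tau_{i}$ term of Theorem~\ref{t1} drops out entirely. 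What is left is an expression that is affine in $s$, in $\beta_{1}+\beta_{s}$ and in $\sum_{i=1}^{s}\alpha_{i}$, with six explicit numerical coefficients read off from the substitution above.

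Next I would treat the two cases separately. For $s=1$ the chain is $L_{n}$, its unique segment has length $n\geq3$, so $\alpha_{1}=\beta_{1}=0$ and Theorem~\ref{t1} collapses to $(3n+1)9^{\gamma}+2\cdot4^{\gamma}+4\cdot6^{\gamma}-6\cdot9^{\gamma}$; expanding $A=(3n+1)9^{\gamma}-4\cdot12^{\gamma}+6^{\gamma+1}+2\cdot4^{\gamma}$ and using $6^{\gamma+1}-2\cdot6^{\gamma}=4\cdot6^{\gamma}$ shows this equals $A+(4\cdot12^{\gamma}-6\cdot9^{\gamma})-2\cdot6^{\gamma}$, the claimed $s=1$ value. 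For $s\geq2$ I would first replace $\sum_{i=1}^{s}\alpha_{i}$ by $\sum_{i=2}^{s-1}\beta_{i}$; this is legitimate because $\alpha_{1}=\alpha_{s}=0$ while, for $2\leq i\leq s-1$, both $\alpha_{i}$ and $\beta_{i}$ equal $1$ precisely when $l(S_{i})=2$.

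It then only remains to reconcile this affine form with the presentation of An and Xiong, in which the $\sum_{i=2}^{s-1}\beta_{i}$-dependence is split off into the term $2^{\gamma+1}\sum_{i=2}^{s-1}(3+\beta_{i})^{\gamma}$. Using the identity recorded just before the statement, $(3+\beta_{i})^{\gamma}=3^{\gamma}+(4^{\gamma}-3^{\gamma})\beta_{i}$ (valid since $\beta_{i}\in\{0,1\}$), this term equals $2\cdot6^{\gamma}(s-2)+(2\cdot8^{\gamma}-2\cdot6^{\gamma})\sum_{i=2}^{s-1}\beta_{i}$; adding it to the remaining part of An and Xiong's expression and collecting the coefficients of $1$, $s$, $\beta_{1}+\beta_{s}$ and $\sum_{i=2}^{s-1}\beta_{i}$ should match, term by term, the specialization of Theorem~\ref{t1} found above. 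I do not expect a genuine obstacle: the whole argument is a direct substitution together with the two elementary reductions $\tau\equiv0$ and $\sum\alpha_{i}=\sum_{i=2}^{s-1}\beta_{i}$, and the only ``hard'' part is the mechanical bookkeeping needed to keep the six monomials $4^{\gamma},6^{\gamma},8^{\gamma},9^{\gamma},12^{\gamma},16^{\gamma}$ straight while translating between the two equivalent forms of the $s\geq2$ case. The hypothesis $\gamma\geq1$ is in fact not used anywhere in this derivation.
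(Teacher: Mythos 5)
Your proposal is correct and follows exactly the route the paper takes: specialize Theorem~\ref{t1} with $\theta_{a,b}=(ab)^{\gamma}$, drop the $\tau$-term since $B_{n}\in\Omega_{n}$, identify $\sum_{i=1}^{s}\alpha_{i}$ with $\sum_{i=2}^{s-1}\beta_{i}$, and reconcile with An and Xiong's form via the identity $(3+\beta_{i})^{\gamma}=3^{\gamma}+(4^{\gamma}-3^{\gamma})\beta_{i}$ stated just before the corollary. Your coefficient bookkeeping (including the $s=1$ check and the observation that $\gamma\geq1$ is never used) is accurate, so nothing further is needed.
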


To characterize the extremal polyomino chains with respect to BID indices, let us suppose that
$$\Theta_{1}=2\theta_{2,3}-6\theta_{3,3}+4\theta_{3,4}, \ \Theta_{2}=\theta_{2,4}-\theta_{2,3}+\theta_{3,3}-\theta_{3,4},$$ $$\Theta_{3}=2\theta_{2,4}-2\theta_{2,3}+3\theta_{3,3}-4\theta_{3,4}+\theta_{4,4} \text{  and  } \Theta_{4}=\theta_{3,3}-2\theta_{3,4}+\theta_{4,4}.$$
Furthermore, let $\Psi_{TI}(S_{1})=\Theta_{1}+\Theta_{2}\beta_{1}, \ \Psi_{TI}(S_{s})=\Theta_{1}+\Theta_{2}\beta_{s}$ and for $s\geq3$, assume that $\Psi_{TI}(S_{i})=\Theta_{1}+\Theta_{3}\alpha_{i}+\Theta_{4}\tau_{i}$ where $2\leq i\leq s-1$. Then
\begin{equation}\label{Eq.11}
\Psi_{TI}(B_{n})=\sum_{i=1}^{s}\Psi_{TI}(S_{i})=\Theta_{1}s+\Theta_{2}(\beta_{1}+\beta_{s})
+\Theta_{3}\sum_{i=1}^{s}\alpha_{i}+\Theta_{4}\sum_{i=1}^{s}\tau_{i} \ .
\end{equation}
Hence the formula given in Theorem \ref{t1} can be rewritten as
\begin{equation}\label{Eq.12}
TI(B_{n})=3n\theta_{3,3}+(2\theta_{2,2}+2\theta_{2,3}+\theta_{3,3}-4\theta_{3,4})+\Psi_{TI}(B_{n}).
\end{equation}
Therefore, keeping the relation (\ref{Eq.12}) in mind, one have the following straightforward but important lemma for characterizing the extremal polyomino chains.

\begin{lem}\label{L1}
For any polyomino chain $B_{n}$ having $n\geq3$ squares, $TI(B_{n})$ is maximum (respectively minimum) if and only if $\Psi_{TI}(B_{n})$ is maximum (respectively minimum).
\end{lem}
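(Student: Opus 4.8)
The plan is to obtain the statement as an immediate consequence of the rewritten formula (\ref{Eq.12}) supplied by Theorem \ref{t1}. The key point is that, when we compare polyomino chains over the family of \emph{all} polyomino chains with a \emph{fixed} number $n$ of squares, the quantity
$$C_{n}=3n\theta_{3,3}+\bigl(2\theta_{2,2}+2\theta_{2,3}+\theta_{3,3}-4\theta_{3,4}\bigr)$$
is a constant: it depends only on $n$ and on the prescribed function $\theta_{a,b}$, but not on the number of segments, the length vector, or the structural vectors of $B_{n}$. Hence (\ref{Eq.12}) reads $TI(B_{n})=C_{n}+\Psi_{TI}(B_{n})$ for every polyomino chain $B_{n}$ with $n$ squares.

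From here the argument is a one-line cancellation. Suppose $B_{n}^{\ast}$ attains the maximum value of $TI$ over all polyomino chains with $n$ squares, and let $B_{n}$ be any such chain. Then $C_{n}+\Psi_{TI}(B_{n}^{\ast})=TI(B_{n}^{\ast})\geq TI(B_{n})=C_{n}+\Psi_{TI}(B_{n})$, and subtracting $C_{n}$ gives $\Psi_{TI}(B_{n}^{\ast})\geq\Psi_{TI}(B_{n})$; thus $B_{n}^{\ast}$ also maximizes $\Psi_{TI}$. Running the same computation in the reverse direction shows that any maximizer of $\Psi_{TI}$ is a maximizer of $TI$, and replacing ``$\geq$'' by ``$\leq$'' throughout yields the corresponding equivalence for the minimum. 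This proves the ``if and only if'' in both cases.

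There is essentially no obstacle: all the substantive work is already done in Theorem \ref{t1} and absorbed into the algebraic reorganization (\ref{Eq.12}). The only subtlety worth flagging explicitly is that the optimization is performed among chains with the \emph{same} value of $n$, which is precisely what makes $C_{n}$ a genuine additive constant; once this is noted, the equivalence of the two extremal problems is immediate, and it is this reduction that makes the later characterizations (in which one only needs to control $\Psi_{TI}(B_{n})$, a sum of the local contributions $\Psi_{TI}(S_{i})$) tractable.
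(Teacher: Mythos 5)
Your argument is correct and is exactly the paper's (implicit) justification: the paper states the lemma as an immediate consequence of relation (\ref{Eq.12}), since for fixed $n$ the term $3n\theta_{3,3}+(2\theta_{2,2}+2\theta_{2,3}+\theta_{3,3}-4\theta_{3,4})$ is an additive constant independent of the chain's structure. Your explicit cancellation argument just spells out what the paper leaves as ``straightforward.''
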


\begin{thm}\label{t2}
Let $B_{n}$ be any polyomino chain with $n\geq3$ squares.\\ \\
\textit{(1).} If $\Theta_{1}>0$ and $\Theta_{1}+2\Theta_{i}>0$ for $i=2,3,4$, then $TI(B_{n})$ is minimum if and only if $B_{n}\cong L_{n}$.\\ \\
\textit{(2).} If $\Theta_{1}<0$ and $\Theta_{1}+2\Theta_{i}<0$ for $i=2,3,4$, then $TI(B_{n})$ is maximum if and only if $B_{n}\cong L_{n}$.
\end{thm}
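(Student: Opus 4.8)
The plan is to work entirely with the auxiliary quantity $\Psi_{TI}(B_n)$: by Lemma \ref{L1}, $TI(B_n)$ is minimized (resp.\ maximized) over the set of polyomino chains with $n$ squares precisely when $\Psi_{TI}(B_n)$ is, so it suffices to locate the chains extremal for $\Psi_{TI}$. I will carry out part (1) in detail; part (2) is obtained by reversing every inequality.

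First I would record that the linear chain $L_n$ has a single segment, so $\beta_1=0$ and $\sum_i\alpha_i=\sum_i\tau_i=0$, whence (\ref{Eq.11}) gives $\Psi_{TI}(L_n)=\Theta_1$. Since a polyomino chain is linear if and only if it has exactly one segment, proving (1) reduces to showing $\Psi_{TI}(B_n)>\Theta_1$ for every chain $B_n$ with $s\ge 2$ segments.

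The key step is a per-segment estimate. In the decomposition $\Psi_{TI}(B_n)=\sum_{i=1}^{s}\Psi_{TI}(S_i)$ of (\ref{Eq.11}) we have $\Psi_{TI}(S_1)=\Theta_1+\Theta_2\beta_1$ and $\Psi_{TI}(S_s)=\Theta_1+\Theta_2\beta_s$ with $\beta_1,\beta_s\in\{0,1\}$, while $\Psi_{TI}(S_i)=\Theta_1+\Theta_3\alpha_i+\Theta_4\tau_i$ for $2\le i\le s-1$ with $\alpha_i,\tau_i\in\{0,1\}$; moreover $\alpha_i$ and $\tau_i$ cannot both equal $1$, because $\alpha_i=1$ forces $l(S_i)=2$ whereas $\tau_i=1$ forces $l(S_i)=3$. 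Hence each summand $\Psi_{TI}(S_i)$ is one of the four numbers $\Theta_1$, $\Theta_1+\Theta_2$, $\Theta_1+\Theta_3$, $\Theta_1+\Theta_4$. The hypotheses $\Theta_1>0$ and $\Theta_1+2\Theta_i>0$ for $i=2,3,4$ then give immediately $\Theta_1>\tfrac12\Theta_1$ and $\Theta_1+\Theta_i=\tfrac12\Theta_1+\tfrac12(\Theta_1+2\Theta_i)>\tfrac12\Theta_1$, so every $\Psi_{TI}(S_i)>\tfrac12\Theta_1$. Summing over the $s\ge 2$ segments yields $\Psi_{TI}(B_n)>\tfrac{s}{2}\Theta_1\ge\Theta_1$ (using $\Theta_1>0$), and combined with $\Psi_{TI}(L_n)=\Theta_1$ and Lemma \ref{L1} this proves (1). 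For part (2), when $\Theta_1<0$ and $\Theta_1+2\Theta_i<0$ the same identities give $\Psi_{TI}(S_i)<\tfrac12\Theta_1<0$ for each segment, hence $\Psi_{TI}(B_n)<\tfrac{s}{2}\Theta_1\le\Theta_1=\Psi_{TI}(L_n)$ whenever $s\ge 2$, so $L_n$ is the unique maximizer.

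I do not anticipate a real obstacle; the one point needing care is the combinatorial bookkeeping of which structural parameters can be simultaneously nonzero on a single segment — in particular that $\alpha_i$ and $\tau_i$ are not both $1$ — which is exactly why the four scalar conditions $\Theta_1>0$, $\Theta_1+2\Theta_i>0$ are enough and no "mixed" hypothesis such as $\Theta_1+\Theta_3+\Theta_4>0$ is required, together with the elementary observation that the correct per-segment threshold to aim for is $\tfrac12\Theta_1$, dictated by the worst case $s=2$.
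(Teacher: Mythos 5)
Your proposal is correct and follows essentially the same route as the paper: reduce to $\Psi_{TI}$ via Lemma \ref{L1} and the decomposition (\ref{Eq.11}), note $\Psi_{TI}(L_{n})=\Theta_{1}$, and show that any chain with $s\geq2$ segments has strictly larger (resp.\ smaller) $\Psi_{TI}$. The only cosmetic difference is that you bound every segment uniformly by $\Theta_{1}/2$ instead of pairing the two external segments and using positivity of the internal ones, and you make explicit the point (left implicit in the paper) that $\alpha_{i}$ and $\tau_{i}$ cannot both equal $1$.
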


\begin{proof}
\textit{(1).} Suppose that $B_{n}$ has $s$ segments $S_{1}, S_{2},S_{3},...,S_{s}$ with the length vector $l=(l_{1},l_{2},...,l_{s})$ and structural vector $\tau=(\tau_{1},\tau_{2},...,\tau_{s})$. If $s\geq2$ then
\[\Psi_{TI}(S_{1})+\Psi_{TI}(S_{s})=2\Theta_{1}+\Theta_{2}[\beta(S_{1})+\beta(S_{s})]>\Theta_{1},\]
the last inequality follows from the facts $\beta(S_{1})+\beta(S_{s})\leq2$ and $\Theta_{1}+2\Theta_{2}>0$. Also, the inequalities $\Theta_{1}>0$, $\Theta_{1}+2\Theta_{3}>0$ and $\Theta_{1}+2\Theta_{4}>0$ implies that $\Theta_{1}+\Theta_{3}>0$ and $\Theta_{1}+\Theta_{4}>0$. Hence for $2\leq i\leq s-1$ (if $s\geq3$), the quantity $\Psi_{TI}(S_{i})$ must be positive. Therefore, for $s\geq2$
\[\Psi_{TI}(B_{n})=\sum_{i=1}^{s}\Psi_{TI}(S_{i})>\Theta_{1}=\Psi_{TI}(L_{n}).\]
By using Lemma \ref{L1}, we have $TI(B_{n})\geq TI(L_{n})$ with equality if and only if $B_{n}\cong L_{n}$.\\

\textit{(2).} The proof is fully analogous to that of part \textit{(1)}.

\end{proof}

Equation (\ref{z}) gives the first geometric-arithmetic index for $\theta_{a,b}= \frac{2\sqrt{ab}}{a+b}$,
Randi$\acute{c}$ index for $\theta_{a,b}= \frac{1}{\sqrt{ab}}$ and sum-connectivity index for $\theta_{a,b}= \frac{1}{\sqrt{a+b}}$. Rada \cite{r4} recently proved that the linear chain $L_{n}$ has the extremal value for many well known topological indices (including the aforementioned indices). This result can also be deduced from Theorem \ref{t2}:
\begin{cor}\cite{r4}
Among all polyomino chains with $n$ squares, the linear chain $L_{n}$ has the maximum Randi\'{c} index,
maximum sum-connectivity index, maximum harmonic index, maximum geometric-arithmetic
index, minimum first Zagreb index and minimum second Zagreb index.
\end{cor}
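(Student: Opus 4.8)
The plan is to read the Corollary off from Theorem~\ref{t2}: for each of the six indices it suffices to compute the sign of the four quantities $\Theta_1$ and $\Theta_1+2\Theta_i$ ($i=2,3,4$) and then invoke the appropriate part of that theorem. Since a polyomino chain with $n\ge 3$ squares has only vertices of degrees $2,3,4$ and (inside a segment) no edge joins two degree-$2$ vertices, the values $\theta_{a,b}$ that occur are exactly those for $(a,b)\in\{(2,3),(2,4),(3,3),(3,4),(4,4)\}$. From the definitions of $\Theta_1,\dots,\Theta_4$ one obtains the convenient reformulations
\[
\Theta_1=2\theta_{2,3}-6\theta_{3,3}+4\theta_{3,4},\qquad
\Theta_1+2\Theta_2=2(\theta_{2,4}-2\theta_{3,3}+\theta_{3,4}),
\]
\[
\Theta_1+2\Theta_3=2(2\theta_{2,4}-\theta_{2,3}-2\theta_{3,4}+\theta_{4,4}),\qquad
\Theta_1+2\Theta_4=2(\theta_{2,3}-2\theta_{3,3}+\theta_{4,4}),
\]
and these are the four numbers I will evaluate index by index.

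First I would dispose of the two Zagreb indices, which are rational. For $M_1$ one has $\theta_{a,b}=a+b$, so the five relevant values are $5,6,6,7,8$ and all four quantities above equal $2>0$; for $M_2$ one has $\theta_{a,b}=ab$, the five values are $6,8,9,12,16$ and the four quantities are $6,4,4,8$, again all positive. Hence part~(1) of Theorem~\ref{t2} applies and $L_n$ is the unique minimizer of $M_1$ and of $M_2$.

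Next I would handle the four ``maximum'' indices. For the Randi\'{c} index $\theta_{a,b}=(ab)^{-1/2}$ the relevant values are $6^{-1/2},8^{-1/2},\tfrac13,12^{-1/2},\tfrac14$; for the sum-connectivity index $\theta_{a,b}=(a+b)^{-1/2}$ they are $5^{-1/2},6^{-1/2},6^{-1/2},7^{-1/2},8^{-1/2}$; for the harmonic index $\theta_{a,b}=2/(a+b)$ they are $\tfrac25,\tfrac13,\tfrac13,\tfrac27,\tfrac14$; and for the geometric-arithmetic index $\theta_{a,b}=2\sqrt{ab}/(a+b)$ they are $\tfrac{2\sqrt6}{5},\tfrac{2\sqrt8}{6},1,\tfrac{2\sqrt{12}}{7},1$. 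Substituting, in every one of these four cases I expect to find $\Theta_1<0$ and $\Theta_1+2\Theta_i<0$ for $i=2,3,4$ (for instance $\Theta_1=2/\sqrt6+2/\sqrt3-2<0$, equivalent to $1+\sqrt2<\sqrt6$, i.e. $8<9$, for the Randi\'{c} index), so part~(2) of Theorem~\ref{t2} gives that $L_n$ is the unique maximizer of each. Together with the previous paragraph this proves the Corollary.

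The only non-trivial part of the verification is the three indices built from square roots (Randi\'{c}, sum-connectivity, geometric-arithmetic): several of the quantities above are quite close to $0$ — e.g. $\Theta_1\approx-0.03$ for the Randi\'{c} index and $\Theta_1\approx-0.04$ for sum-connectivity — so the main obstacle is simply to compare the surds carefully enough (by rationalizing, or by isolating a single radical and squaring) to be certain of the strict sign. The harmonic index and both Zagreb indices are entirely rational and require no such care.
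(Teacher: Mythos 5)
Your proposal is correct and takes essentially the same route as the paper: the published proof likewise just verifies by ``routine computations'' that the sign hypotheses of Theorem~\ref{t2} hold (part~(1) for the two Zagreb indices, part~(2) for the Randi\'{c}, sum-connectivity, harmonic and geometric-arithmetic indices), which is exactly what you carry out explicitly via the values of $\Theta_1$ and $\Theta_1+2\Theta_i$. One immaterial slip: a polyomino chain does have edges joining two degree-$2$ vertices (indeed $x_{2,2}(B_n)=2$), but since $\theta_{2,2}$ does not appear in any $\Theta_i$ this does not affect your verification.
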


\begin{proof}
Routine computations yield that all $\Theta_{1},\Theta_{2},\Theta_{3},\Theta_{4}$ satisfy the hypothesis of Theorem \ref{t2}(2) for the Randi\'{c} index, sum-connectivity index, harmonic index and geometric-arithmetic index. Moreover, $\Theta_{1},\Theta_{2},\Theta_{3},\Theta_{4}$ satisfy the hypothesis of Theorem \ref{t2}(1) for the first Zagreb index and second Zagreb index. Therefore, by virtue of Theorem \ref{t2}, one have the desired result.
\end{proof}

\begin{thm}\label{t22}
Let $B_{n}\in\Omega_{n}$ be a polyomino with $n\geq3$ squares.\\ \\
\textit{(1).} If $\Theta_{1}$, $\Theta_{1}+2\Theta_{2}$ and $\Theta_{1}+2\Theta_{3}$ are all positive, then
\[TI(L_{n})\leq TI(B_{n})\leq TI(Z_{n}).\]
Right (respectively left) equality holds if and only if $B_{n}\cong Z_{n}$ (respectively $B_{n}\cong L_{n}$).\\ \\
\textit{(2).} If $\Theta_{1}$, $\Theta_{1}+2\Theta_{2}$ and $\Theta_{1}+2\Theta_{3}$ are all negative, then
\[TI(Z_{n})\leq TI(B_{n})\leq TI(L_{n}).\]
Right (respectively left) equality holds if and only if $B_{n}\cong L_{n}$ (respectively $B_{n}\cong Z_{n}$).

\end{thm}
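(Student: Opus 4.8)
The plan is to reduce everything, via Lemma~\ref{L1} and the identity (\ref{Eq.12}), to the purely combinatorial claim that on $\Omega_n$ the functional $\Psi_{TI}$ is minimised exactly by $L_n$ and maximised exactly by $Z_n$. Since $B_n\in\Omega_n$, no internal segment of length three carries the edge joining two degree-three vertices, so $\tau_i=0$ for all $i$, and (\ref{Eq.11}) becomes
\[\Psi_{TI}(B_n)=\Theta_1\,s+\Theta_2(\beta_1+\beta_s)+\Theta_3\sum_{i=1}^{s}\alpha_i .\]
A first, routine step is to record the consequences of the hypotheses of part (1): $\Theta_1+\Theta_3>0$ (from $\Theta_1>0$ and $\Theta_1+2\Theta_3>0$, distinguishing the signs of $\Theta_3$) and $\Theta_1+\Theta_2+\Theta_3>0$ (add $\Theta_1+2\Theta_2>0$ to $\Theta_1+2\Theta_3>0$ and halve).

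For the left inequality one essentially re-runs Theorem~\ref{t2}(1) in this setting: for $s\ge2$ one has $\Psi_{TI}(S_1)+\Psi_{TI}(S_s)=2\Theta_1+\Theta_2(\beta_1+\beta_s)\ge\min\{2\Theta_1,\,2\Theta_1+2\Theta_2\}>\Theta_1$, while $\Psi_{TI}(S_i)=\Theta_1+\Theta_3\alpha_i>0$ for $2\le i\le s-1$ (using $\Theta_1>0$ and $\Theta_1+\Theta_3>0$), so $\Psi_{TI}(B_n)>\Theta_1=\Psi_{TI}(L_n)$, with equality forcing $s=1$, i.e.\ $B_n\cong L_n$. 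Note that the $\Theta_4$-hypothesis of Theorem~\ref{t2} is not needed here precisely because $\tau\equiv0$ on $\Omega_n$.

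The substantive step is the right inequality. I would pass to the parameters $s$, $b:=\beta_1+\beta_s\in\{0,1,2\}$ (the number of terminal length-two segments), and $k:=$ the number of internal segments of length $\ge3$, so that for $s\ge2$ one has $\sum_{i=1}^{s}\alpha_i=s-2-k$ and $\Psi_{TI}(B_n)=(\Theta_1+\Theta_3)s+\Theta_2 b-\Theta_3 k-2\Theta_3$. The only extra input is the segment count $\sum_{i=1}^{s}l_i=n+s-1$: the $b+(s-2-k)$ short segments contribute $2$ each and the remaining $(2-b)+k$ segments contribute $\ge3$ each, whence $n+s-1\ge 2\bigl(b+s-2-k\bigr)+3\bigl(2-b+k\bigr)$, which collapses to the key bound $s\le n-3-k+b$. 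Substituting this (allowed since $\Theta_1+\Theta_3>0$) gives
\[\Psi_{TI}(B_n)\ \le\ (\Theta_1+\Theta_3)(n-3)-2\Theta_3+(\Theta_1+\Theta_2+\Theta_3)\,b-(\Theta_1+2\Theta_3)\,k,\]
and now $\Theta_1+2\Theta_3>0$ and $\Theta_1+\Theta_2+\Theta_3>0$ push the right-hand side to its maximum at $k=0$, $b=2$, which is exactly $\Psi_{TI}(Z_n)=\Theta_1(n-1)+2\Theta_2+\Theta_3(n-3)$. Equality throughout requires $s=n-3-k+b$, $k=0$ and $b=2$, hence $s=n-1$; by the second of the elementary facts recorded before Theorem~\ref{t1}, $s=n-1$ is equivalent to $B_n\cong Z_n$. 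The case $s=1$ is handled directly: $\Psi_{TI}(Z_n)-\Psi_{TI}(L_n)=(n-3)(\Theta_1+\Theta_3)+\Theta_1+2\Theta_2>0$ for $n\ge3$. Part (2) is identical with every $\Theta$-inequality, and hence every estimate, reversed.

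The hard part is the upper bound: it genuinely cannot be carried out segment by segment, because replacing a long segment by a run of kinks simultaneously raises $s$ and lowers $\sum\alpha_i$, and only the relation $\sum l_i=n+s-1$ makes this trade-off quantitative. The care is in deriving $s\le n-3-k+b$ and, for the equality analysis, in checking that tightness everywhere really pins down $s=n-1$ so that fact~(2) applies; one should also confirm that the small cases $s\in\{1,2\}$ and $n=3$ are subsumed by the argument.
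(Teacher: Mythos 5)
Your proposal is correct, and for the substantive half (the upper bound) it takes a genuinely different route from the paper. The paper argues by contradiction on an extremal chain: it takes $B_n^*\in\Omega_n$ maximizing $\Psi_{TI}$, shows $s>1$, and then performs local exchanges, replacing any segment of length $\ge 3$ (external or internal) by a length-$2$ segment plus a shortened one, each move increasing $\Psi_{TI}$ by $\bigl(\tfrac{\Theta_1}{2}+\Theta_2\bigr)+\bigl(\tfrac{\Theta_1}{2}+x\Theta_3\bigr)$ or $\bigl(\tfrac{\Theta_1}{2}+\Theta_3\bigr)+\bigl(\tfrac{\Theta_1}{2}+y\Theta_3\bigr)$, forcing $B_n^*\cong Z_n$. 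You instead prove the bound directly and globally: with $\tau\equiv 0$ on $\Omega_n$ you write $\Psi_{TI}$ in the parameters $s$, $b=\beta_1+\beta_s$, $k$ (internal segments of length $\ge 3$), extract the inequality $s\le n-3+b-k$ from $\sum l_i=n+s-1$, and optimize the resulting linear expression using exactly the hypotheses $\Theta_1+\Theta_3>0$, $\Theta_1+2\Theta_3>0$, $\Theta_1+\Theta_2+\Theta_3>0$; your algebra checks out, the optimum coincides with $\Psi_{TI}(Z_n)$, and your equality analysis ($b=2$, $k=0$, $s=n-1$, then the zigzag characterization) is clean — arguably cleaner than the paper's, since the exchange argument needs one to verify that the modified chains exist in $\Omega_n$ and delivers the equality case only implicitly through the maximality framing. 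Your lower bound and part (2) follow the paper's lines (Theorem \ref{t2}(1) with $\tau\equiv 0$, signs reversed), and your observation that the $\Theta_4$ hypothesis is unnecessary on $\Omega_n$ is exactly why the theorem is stated with only three sign conditions. What the paper's approach buys is brevity and reusability of the exchange moves in later theorems; what yours buys is an explicit, self-contained bound with a transparent equality case and no appeal to the existence of a maximizer.
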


\begin{proof}
\textit{(1).} The proof of lower bound is analogous to that of Theorem \ref{t2}\textit{(1)}. To prove the upper bound let us suppose that for the polyomino chain $B_{n}^{*}\in\Omega_{n}$, $\Psi_{TI}(B_{n}^{*})$ is maximum. Let $B_{n}^{*}$ has $s$ segments $S_{1}, S_{2},...,S_{s}$ with the length vector $(l_{1},l_{2},...,l_{s})$. Simple calculations yield that $\Psi_{TI}(Z_{n})> \Psi_{TI}(L_{n})$, which means that $s$ must be greater than 1.

If at least one of external segments of $B_{n}^{*}$ has length greater than 2. Without loss of generality, assume that $l_{1}\geq3$. Then it can be easily seen that there exist a polyomino chain $B_{n}^{(1)}\in\Omega_{n}$ having length vector $(2,l_{1}-1,l_{2},...,l_{s})$ and
\[\Psi_{TI}(B_{n}^{(1)})-\Psi_{TI}(B_{n}^{*})=\left(\frac{\Theta_{1}}{2}+\Theta_{2}\right)+\left(\frac{\Theta_{1}}{2}+x\Theta_{3}\right)>0, \ \ \ (\text{where $x=0$ or 1})\]
which is a contradiction to the definition of $B_{n}^{*}$. Hence both external segments of $B_{n}^{*}$ must have length 2.

If some internal segment of $B_{n}^{*}$ has length greater than 2, say $l_{j}\geq3$ where $2\leq j\leq s-1$ and $s\geq3$. Then there exist a polyomino chain $B_{n}^{(2)}\in\Omega_{n}$ having length vector $(l_{1},l_{2},...,l_{j-1},2,l_{j}-1,...,l_{s})$ and
\[\Psi_{TI}(B_{n}^{(2)})-\Psi_{TI}(B_{n}^{*})=\left(\frac{\Theta_{1}}{2}+\Theta_{3}\right)+\left(\frac{\Theta_{1}}{2}+y\Theta_{3}\right)>0, \ \ \ (\text{where $y=0$ or 1})\]
again a contradiction. Hence every internal segment of $B_{n}^{*}$ has length 2. Therefore, $B_{n}^{*}\cong Z_{n}$ and by Lemma \ref{L1} desired result follows. \\ \\
\textit{(2).} The proof is fully analogous to that of Part \textit{(1).}

\end{proof}

Recall that $\Theta_{1}=2,\Theta_{2}=\Theta_{3}=0$ for the first Zagreb index $M_{1}$ and $\Theta_{1}=6,\Theta_{2}=\Theta_{3}=-1$ for the second Zagreb index $M_{2}$. Hence the following corollary follows from Theorem \ref{t22}\textit{(1)}:
\begin{cor}\cite{z}
Let $B_{n}\in\Omega_{n}$ be a polyomino chain having $n\geq3$ squares, then
$$M_{i}(L_{n}) \leq M_{i}(B_{n})\leq M_{i}(Z_{n}), \ \ \text{ (where } i=1,2)$$
with left (respectively right) equality if and only if $B_{n}\cong L_{n}$ (respectively $B_{n}\cong Z_{n}$).
\end{cor}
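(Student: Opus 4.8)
The plan is to obtain both two‑sided estimates as an immediate specialization of Theorem~\ref{t22}\textit{(1)}, so that the work reduces to checking sign conditions. First I would recall from Equation~(\ref{z}) that the first Zagreb index $M_{1}$ corresponds to the weight $\theta_{a,b}=a+b$ and the second Zagreb index $M_{2}$ to $\theta_{a,b}=ab$. Substituting the six relevant values $\theta_{2,2},\theta_{2,3},\theta_{2,4},\theta_{3,3},\theta_{3,4},\theta_{4,4}$ into the definitions of $\Theta_{1},\Theta_{2},\Theta_{3}$, I expect to recover $\Theta_{1}=2$, $\Theta_{2}=\Theta_{3}=0$ for $M_{1}$ and $\Theta_{1}=6$, $\Theta_{2}=\Theta_{3}=-1$ for $M_{2}$, exactly the values recorded just before the statement.

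The second step is to verify the hypothesis of Theorem~\ref{t22}\textit{(1)}, namely that $\Theta_{1}$, $\Theta_{1}+2\Theta_{2}$ and $\Theta_{1}+2\Theta_{3}$ are all positive. For $M_{1}$ each of these equals $2$; for $M_{2}$ they equal $6$, $4$ and $4$. Hence, since $B_{n}\in\Omega_{n}$, Theorem~\ref{t22}\textit{(1)} applies with $TI=M_{i}$ and yields $M_{i}(L_{n})\le M_{i}(B_{n})\le M_{i}(Z_{n})$ together with the asserted equality cases, for $i=1,2$. As a cross‑check one could instead invoke the closed forms $M_{1}(B_{n})=18n+2s-4$ and $M_{2}(B_{n})=27n+6s-19-\sum_{i}\beta_{i}$ from the earlier corollary and track how they vary as the number of segments $s$ ranges over $1\le s\le n-1$; but the $\Psi_{TI}$ route via Theorem~\ref{t22} is cleaner, especially for $M_{2}$, where the $\sum_{i}\beta_{i}$ correction makes the direct dependence on $s$ less transparent.

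There is no genuine obstacle here: the substantive combinatorics has already been packaged into Theorem~\ref{t1} and Theorem~\ref{t22}, so the only points deserving attention are bookkeeping ones — computing the three linear combinations $\Theta_{1},\Theta_{2},\Theta_{3}$ correctly at the two weights, and noting that because we restrict to the class $\Omega_{n}$ the $\tau$‑dependent term (hence $\Theta_{4}$) never enters. This is precisely why Theorem~\ref{t22}\textit{(1)} rather than Theorem~\ref{t2} is the appropriate instrument, and why the conclusion is a genuine two‑sided bound with $L_{n}$ and $Z_{n}$ as the two extremal chains.
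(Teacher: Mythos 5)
Your proposal is correct and follows exactly the paper's route: the paper likewise records $\Theta_{1}=2$, $\Theta_{2}=\Theta_{3}=0$ for $M_{1}$ and $\Theta_{1}=6$, $\Theta_{2}=\Theta_{3}=-1$ for $M_{2}$, checks that $\Theta_{1}$, $\Theta_{1}+2\Theta_{2}$, $\Theta_{1}+2\Theta_{3}$ are positive, and invokes Theorem~\ref{t22}\textit{(1)}. Your computed values and sign checks are accurate, so nothing is missing.
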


Since all $\Theta_{1},\Theta_{2},\Theta_{3}$ are negative for the harmonic index $H$, hence from Theorem \ref{t22}\textit{(2)} we have:
\begin{cor}\cite{deng}
Let $B_{n}\in\Omega_{n}$ be any polyomino chain having $n\geq3$ squares, then
$$H(Z_{n}) \leq H(B_{n})\leq H(L_{n})$$
with left (respectively right) equality if and only if $B_{n}\cong Z_{n}$ (respectively $B_{n}\cong L_{n}$).
\end{cor}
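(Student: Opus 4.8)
The plan is to recognize the harmonic index $H$ as the instance of the BID index \eqref{z} obtained by taking $\theta_{a,b}=\frac{2}{a+b}$, and then simply to verify that the coefficients $\Theta_1,\Theta_2,\Theta_3$ associated with this choice satisfy the hypotheses of Theorem \ref{t22}\textit{(2)}. All the genuine combinatorial work has already been carried out in Theorems \ref{t1} and \ref{t22}, so the only task remaining here is a finite check.

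First I would record the values of $\theta_{a,b}$ on the degree pairs that actually occur in a polyomino chain, namely those with $2\le a\le b\le 4$:
\[
\theta_{2,2}=\tfrac12,\quad \theta_{2,3}=\tfrac25,\quad \theta_{2,4}=\theta_{3,3}=\tfrac13,\quad \theta_{3,4}=\tfrac27,\quad \theta_{4,4}=\tfrac14 .
\]
Substituting these into the definitions $\Theta_1=2\theta_{2,3}-6\theta_{3,3}+4\theta_{3,4}$, $\Theta_2=\theta_{2,4}-\theta_{2,3}+\theta_{3,3}-\theta_{3,4}$ and $\Theta_3=2\theta_{2,4}-2\theta_{2,3}+3\theta_{3,3}-4\theta_{3,4}+\theta_{4,4}$ yields $\Theta_1=-\tfrac{2}{35}$, $\Theta_2=-\tfrac{2}{105}$ and $\Theta_3=-\tfrac{11}{420}$, all strictly negative.

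Since $\Theta_1<0$, $\Theta_2<0$ and $\Theta_3<0$, we automatically get $\Theta_1+2\Theta_2<0$ and $\Theta_1+2\Theta_3<0$, so all three quantities $\Theta_1$, $\Theta_1+2\Theta_2$, $\Theta_1+2\Theta_3$ appearing in the hypothesis of Theorem \ref{t22}\textit{(2)} are negative. Applying that theorem to an arbitrary $B_n\in\Omega_n$ with $n\ge 3$ then gives directly $H(Z_n)\le H(B_n)\le H(L_n)$, with left equality if and only if $B_n\cong Z_n$ and right equality if and only if $B_n\cong L_n$, which is exactly the claim.

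I do not anticipate any real obstacle; the only point requiring care is the fraction arithmetic in evaluating $\Theta_1,\Theta_2,\Theta_3$, and even there it suffices to exhibit the correct sign rather than the exact value. As an alternative route one could instead invoke the closed forms for $TI(L_n)$ and $TI(Z_n)$ from the corollary to Theorem \ref{t1} together with the general formula of Theorem \ref{t1}, and compare term by term; but passing through Theorem \ref{t22}\textit{(2)} is shorter and already packages the extremality and uniqueness statements together.
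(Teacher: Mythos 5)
Your proposal is correct and follows essentially the same route as the paper: the paper likewise observes that $\Theta_{1},\Theta_{2},\Theta_{3}$ are all negative for $\theta_{a,b}=\frac{2}{a+b}$ and invokes Theorem \ref{t22}\textit{(2)}. Your explicit evaluation $\Theta_{1}=-\tfrac{2}{35}$, $\Theta_{2}=-\tfrac{2}{105}$, $\Theta_{3}=-\tfrac{11}{420}$ is accurate (and consistent with the coefficients appearing in the Deng et al.\ formula), so no gap remains.
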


Equation (\ref{z}) gives the natural logarithm of the multiplicative sum Zagreb index for $\theta_{a,b}=ln(a+b)$ and natural logarithm of the multiplicative second Zagreb index for $\theta_{a,b}=lna+lnb$ where $ln$ denotes the natural logarithm. The following corollary is an immediate consequence of Theorem \ref{t22}:

\begin{cor}\label{c1}
Let $B_{n}\in\Omega_{n}$ be a polyomino chain having $n\geq3$ squares.\\ \\
\textit{(1).} If $TI$ is one of the following indices: first geometric-arithmetic index,
Randi$\acute{c}$ index, sum-connectivity index. Then
$$TI(Z_{n}) \leq TI(B_{n})\leq TI(L_{n}),$$
with right (respectively left) equality if and only if $B_{n}\cong L_{n}$ (respectively $B_{n}\cong Z_{n}$).\\ \\
\textit{(2).} For the multiplicative sum Zagreb index $\Pi_{1}^{*}$ and multiplicative second Zagreb index $\Pi_{2}$, the following inequalities hold
$$\Pi_{1}^{*}(L_{n}) \leq \Pi_{1}^{*}(B_{n})\leq \Pi_{1}^{*}(Z_{n})$$
$$\Pi_{2}(L_{n}) \leq \Pi_{2}(B_{n})\leq \Pi_{2}(Z_{n})$$
with left (respectively right) equalities if and only if $B_{n}\cong L_{n}$ (respectively $B_{n}\cong Z_{n}$).
\end{cor}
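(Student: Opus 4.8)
\textbf{Proof proposal for Corollary \ref{c1}.}
The plan is to reduce everything to Theorem \ref{t22} by checking, for each of the five indices, the sign of the three quantities $\Theta_{1}$, $\Theta_{1}+2\Theta_{2}$, $\Theta_{1}+2\Theta_{3}$. For part (1), the three indices in question (first geometric--arithmetic, Randi\'c, sum-connectivity) are all decreasing-type weight functions, so I expect $\Theta_{1}<0$, $\Theta_{1}+2\Theta_{2}<0$, and $\Theta_{1}+2\Theta_{3}<0$, whence Theorem \ref{t22}(2) applies directly and gives $TI(Z_{n})\le TI(B_{n})\le TI(L_{n})$ with the stated equality cases. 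For part (2), the two indices are $\Pi_{1}^{*}$ and $\Pi_{2}$, but only their natural logarithms are of BID form (\ref{z}), so I would instead work with $\ln\Pi_{1}^{*}$ (weight $\theta_{a,b}=\ln(a+b)$) and $\ln\Pi_{2}$ (weight $\theta_{a,b}=\ln a+\ln b$); here I expect $\Theta_{1}>0$, $\Theta_{1}+2\Theta_{2}>0$, $\Theta_{1}+2\Theta_{3}>0$, so Theorem \ref{t22}(1) yields $\ln\Pi_{i}^{*}(L_{n})\le\ln\Pi_{i}^{*}(B_{n})\le\ln\Pi_{i}^{*}(Z_{n})$, and since $\ln$ is strictly increasing this transfers verbatim to $\Pi_{1}^{*}$ and $\Pi_{2}$ themselves, preserving the equality characterizations.

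The concrete computations are routine: substitute the specific $\theta_{a,b}$ into the definitions $\Theta_{1}=2\theta_{2,3}-6\theta_{3,3}+4\theta_{3,4}$, $\Theta_{2}=\theta_{2,4}-\theta_{2,3}+\theta_{3,3}-\theta_{3,4}$, and $\Theta_{3}=2\theta_{2,4}-2\theta_{2,3}+3\theta_{3,3}-4\theta_{3,4}+\theta_{4,4}$. For instance, for the sum-connectivity index one evaluates $\theta_{2,3}=1/\sqrt5$, $\theta_{3,3}=1/\sqrt6$, $\theta_{3,4}=1/\sqrt7$, $\theta_{2,4}=1/\sqrt6$, $\theta_{4,4}=1/\sqrt8$ and checks the three sign conditions numerically; for the Randi\'c index one uses $\theta_{a,b}=1/\sqrt{ab}$ and similarly; for the geometric--arithmetic index $\theta_{a,b}=2\sqrt{ab}/(a+b)$. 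In the logarithmic cases one evaluates $\Theta_{1}=2\ln5-6\ln6+4\ln7$ etc.\ for $\Pi_{1}^{*}$, and $\Theta_{1}=2\ln6-6\ln9+4\ln12$ etc.\ for $\Pi_{2}$, and verifies positivity. Since these are finite arithmetic verifications with explicit small constants, I would simply state ``routine computations yield $\ldots$'' as was done for the earlier corollary, rather than display all the intermediate arithmetic.

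The only genuinely non-mechanical point is the passage between $\Pi_{i}^{*}$ and $\ln\Pi_{i}^{*}$ in part (2): one must note that $\Pi_{1}^{*}(G)=\prod_{uv\in E(G)}(d_{u}+d_{v})$ and $\Pi_{2}(G)=\prod_{uv\in E(G)}d_{u}d_{v}$ are multiplicative analogues of the Zagreb indices whose logarithms are additive and hence fit the additive template (\ref{z}), so that Theorem \ref{t22} — which is stated for additive BID indices — applies to $\ln\Pi_{i}^{*}$, and then strict monotonicity of $\exp$ converts the inequality chain and its equality cases back to $\Pi_{i}^{*}$ without loss. This is the step where one has to be careful to say why the result for the logarithm implies the result for the product, including the ``if and only if'' clauses; everything else is bookkeeping and sign-checking against the hypotheses of Theorem \ref{t22}.
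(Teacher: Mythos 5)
Your proposal is correct and follows essentially the same route as the paper: you verify the sign hypotheses of Theorem \ref{t22} for each weight function (the paper checks $\Theta_{1},\Theta_{2},\Theta_{3}$ individually in part (1), you check the exact conditions $\Theta_{1}$, $\Theta_{1}+2\Theta_{2}$, $\Theta_{1}+2\Theta_{3}$, which amounts to the same routine numerics), and in part (2) you pass to $\ln\Pi_{1}^{*}$ and $\ln\Pi_{2}$ with weights $\ln(a+b)$ and $\ln a+\ln b$ and return via the strictly increasing exponential, exactly as the paper does. A minor point in your favour: for $\ln\Pi_{2}$ one actually has $\Theta_{2}=\Theta_{3}=0$ exactly (the paper's $-0.0001$ is a rounding artifact), so it is precisely the conditions $\Theta_{1}+2\Theta_{i}>0$ that you state, rather than strict positivity of $\Theta_{2},\Theta_{3}$, which make Theorem \ref{t22}(1) applicable.
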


\begin{proof}
\textit{(1).} Simple calculations show that $\Theta_{1},\Theta_{2}$ and $\Theta_{3}$ are all negative for the first geometric-arithmetic index, Randi$\acute{c}$ index and sum-connectivity index. Hence from Theorem \ref{t22}\textit{(2)} desired result follows.\\ \\
\textit{(2).} It is easy to see that $\Theta_{1},\Theta_{2}$ and $\Theta_{3}$ are all positive for the natural logarithm of $\Pi_{1}^{*}$. Also, $\Theta_{1}=0.3398,\Theta_{2}=0,\Theta_{3}=-0.0001$ for the natural logarithm of $\Pi_{2}$. Therefore, by virtu of Theorem \ref{t22}\textit{(1)}, one have
$$ln[\Pi_{1}^{*}(L_{n})] \leq ln[\Pi_{1}^{*}(B_{n})]\leq ln[\Pi_{1}^{*}(Z_{n})]$$
$$ln[\Pi_{2}(L_{n})] \leq ln[\Pi_{2}(B_{n})]\leq ln[\Pi_{2}(Z_{n})]$$
with left (respectively right) equality if and only if $B_{n}\cong L_{n}$ (respectively $B_{n}\cong Z_{n}$). Since the exponential function is inverse of the natural logarithm function and is strictly increasing, hence the required result follows from above inequalities.
\end{proof}

If we replace $\theta_{a,b}$ with $\left(\frac{a+b-2}{ab}\right)^{\gamma}$ and $\left(a+b\right)^{\gamma}$ (where $\gamma$ is a non-zero real number) in Equation (\ref{z}), then $TI$ corresponds to the general atom-bond connectivity index $ABC_{\gamma}$ and general sum-connectivity index $\chi_{\gamma}$ respectively. The following result is another consequence of Theorem \ref{t22}:

\begin{cor}\label{c111}
If $B_{n}\in\Omega_{n}$ is any polyomino chain with $n\geq3$ squares, then
$$R_{\gamma}(L_{n}) \leq R_{\gamma}(B_{n})\leq R_{\gamma}(Z_{n}), \ \gamma>0$$
$$ABC_{\gamma}(L_{n}) \leq ABC_{\gamma}(B_{n})\leq ABC_{\gamma}(Z_{n}), \ \gamma>1$$
$$\chi_{\gamma}(L_{n}) \leq \chi_{\gamma}(B_{n})\leq \chi_{\gamma}(Z_{n}), \ \gamma>0$$
with left (respectively right) equalities if and only if $B_{n}\cong L_{n}$ (respectively $B_{n}\cong Z_{n}$).
\end{cor}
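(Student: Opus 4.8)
The plan is to apply Theorem \ref{t22} three times, once for each of the three families of indices $R_{\gamma}$ (with $\gamma>0$), $ABC_{\gamma}$ (with $\gamma>1$) and $\chi_{\gamma}$ (with $\gamma>0$). For each family I would compute the edge-contribution function $\theta_{a,b}$ explicitly on the relevant degree pairs $(2,2),(2,3),(2,4),(3,3),(3,4),(4,4)$, form the three combinations $\Theta_{1}=2\theta_{2,3}-6\theta_{3,3}+4\theta_{3,4}$, $\Theta_{1}+2\Theta_{2}$ and $\Theta_{1}+2\Theta_{3}$, and verify that all three are strictly positive on the stated parameter range; then Theorem \ref{t22}\textit{(1)} immediately gives $TI(L_{n})\le TI(B_{n})\le TI(Z_{n})$ with the claimed equality cases. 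So the real content is the sign analysis of three one-parameter families of linear combinations of powers.

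For the general Randi\'{c} index, $\theta_{a,b}=(ab)^{\gamma}$, so $\Theta_{1}=2\cdot6^{\gamma}-6\cdot9^{\gamma}+4\cdot12^{\gamma}$, and $\Theta_{2}=12^{\gamma}-6^{\gamma}+9^{\gamma}-12^{\gamma}=9^{\gamma}-6^{\gamma}$, while $\Theta_{3}=2\cdot8^{\gamma}-2\cdot6^{\gamma}+3\cdot9^{\gamma}-4\cdot12^{\gamma}+16^{\gamma}$. Positivity of $\Theta_{2}$ is clear since $9>6$. For $\Theta_{1}$ I would factor out $6^{\gamma}$ and study $g(\gamma)=2-6(3/2)^{\gamma}+4\cdot2^{\gamma}$; one checks $g(0)=0$ and, writing $t=(3/2)^{\gamma}\ge1$, $g=2-6t+4(t^{2})=2(2t-1)(t-1)>0$ for $t>1$, i.e. for $\gamma>0$ (note $2^{\gamma}=((3/2)^{\gamma})\cdot(4/3)^{\gamma}$ is not quite $t^{2}$, so instead I would use strict convexity of $\gamma\mapsto c^{\gamma}$ and the identity $g(0)=0$, $g'(0)>0$ together with the fact that $g$ has at most two zeros by Descartes' rule applied to the exponential sum). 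The cleanest argument: $\Theta_{1}+2\Theta_{i}$ at $\gamma=0$ equals $2\cdot1-6\cdot1+4\cdot1+2\Theta_{i}(0)$; since all $\Theta_{j}(0)=0$ one gets $\Theta_{1}(0)=0$ and $(\Theta_{1}+2\Theta_{i})(0)=0$, and then I would differentiate at $0$ and use a convexity/log-convexity bound to show these linear combinations stay positive for all $\gamma>0$ in the Randi\'{c} and $\chi_{\gamma}$ cases, and for $\gamma>1$ in the $ABC_{\gamma}$ case. For $\chi_{\gamma}$, $\theta_{a,b}=(a+b)^{\gamma}$ gives $\Theta_{1}=2\cdot5^{\gamma}-6\cdot6^{\gamma}+4\cdot7^{\gamma}$, which is again of the form $\alpha c_{1}^{\gamma}-\beta c_{2}^{\gamma}+\delta c_{3}^{\gamma}$ with $c_{1}<c_{2}<c_{3}$ and $2\cdot5^{0}-6\cdot6^{0}+4\cdot7^{0}=0$; for $ABC_{\gamma}$, $\theta_{a,b}=((a+b-2)/(ab))^{\gamma}$ evaluates to $((a+b-2)/(ab))$-values $1,\tfrac12,\tfrac23,\tfrac49,\tfrac{5}{12},\tfrac{3}{8}$ at the six pairs, and the same scheme applies with the caveat that the restriction $\gamma>1$ is needed.

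The main obstacle is the sign verification for $ABC_{\gamma}$: here the bases $(a+b-2)/(ab)$ are all $\le1$ and not monotone in a way that makes the combination obviously positive, so the value $\gamma=1$ is a genuine threshold rather than $\gamma=0$. I would handle this by normalizing (dividing each $\Theta$-combination by the largest base raised to $\gamma$, turning it into a sum of the form $\sum c_{k}r_{k}^{\gamma}$ with $0<r_{k}\le1$ and with a sign pattern having exactly one change), then invoking the general fact that an exponential sum $\sum_{k}c_{k}r_{k}^{\gamma}$ with one sign change in the coefficient sequence has at most one positive real zero (a Descartes-type statement for exponential polynomials), locating that zero at $\gamma=1$ by direct evaluation, and checking the sign just to the right of it. The same one-sign-change observation, with the zero located at $\gamma=0$ instead, disposes of $R_{\gamma}$ and $\chi_{\gamma}$ cleanly, after which an appeal to Theorem \ref{t22}\textit{(1)} finishes all three inequalities at once.
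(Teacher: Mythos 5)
Your overall route is the same as the paper's: reduce everything to checking that $\Theta_{1}$, $\Theta_{1}+2\Theta_{2}$ and $\Theta_{1}+2\Theta_{3}$ are positive on the stated $\gamma$-range and then quote Theorem \ref{t22}\textit{(1)}. The problem is in the sign verification itself, where your explicit data are wrong and your proposed mechanism does not apply as stated. First, $\theta_{2,4}$ is miscomputed throughout: for $R_{\gamma}$ it is $8^{\gamma}$, not $12^{\gamma}$, so $\Theta_{2}=8^{\gamma}+9^{\gamma}-6^{\gamma}-12^{\gamma}$, which is \emph{not} positive (at $\gamma=1$ it equals $-1$, the value the paper records for the second Zagreb index); hence "positivity of $\Theta_{2}$ is clear since $9>6$" collapses, and you genuinely have to prove $\Theta_{1}+2\Theta_{2}=2\bigl(8^{\gamma}-2\cdot 9^{\gamma}+12^{\gamma}\bigr)>0$, which you only sketch. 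Likewise for $ABC_{\gamma}$ the base at $(2,4)$ is $\frac{4}{8}=\frac12$, not $\frac23$ (and at $(2,2)$ it is $\frac12$, not $1$, though $\theta_{2,2}$ never enters the $\Theta$'s). Second, the Descartes-type step is premised on the coefficient sequence having "exactly one sign change", but the combinations that actually occur have the pattern $+,-,+$ (two sign changes): e.g.\ for $ABC_{\gamma}$, $\Theta_{1}=2\bigl(\tfrac12\bigr)^{\gamma}-6\bigl(\tfrac49\bigr)^{\gamma}+4\bigl(\tfrac{5}{12}\bigr)^{\gamma}$, which vanishes at \emph{both} $\gamma=0$ and $\gamma=1$, so a "one positive zero" argument cannot be correct; moreover $\gamma=1$ is not a zero of $\Theta_{1}+2\Theta_{2}$ or $\Theta_{1}+2\Theta_{3}$ in the $ABC$ case, so "locate the zero at $1$ and check the sign to its right" does not describe the actual situation. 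Your convexity idea can be salvaged (divide by the middle base; the result is a convex function vanishing at $\gamma=0$ with positive derivative there, hence positive for $\gamma>0$ in the $R_{\gamma}$ and $\chi_{\gamma}$ cases), but as written it is a plan, not a proof, and the $ABC_{\gamma}$ case needs a separate, more careful treatment because of the threshold at $\gamma=1$.

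For comparison, the paper's proof avoids all of this case-by-case zero counting by using Lagrange's mean value theorem: each combination is rewritten with intermediate points, e.g.\ $\Theta_{1}=2\gamma 3^{\gamma}c_{2}^{\gamma-1}\bigl[2-\bigl(\tfrac{c_{1}}{c_{2}}\bigr)^{\gamma-1}\bigr]$ with $2<c_{1}<3<c_{2}<4$, and similarly $\tfrac{\Theta_{1}}{2}+\Theta_{2}$ and $\tfrac{\Theta_{1}}{2}+\Theta_{3}$, after which elementary bounds on $\bigl(\tfrac{c_{i}}{c_{j}}\bigr)^{\gamma-1}$ give the required positivity for all $\gamma>0$ at once; the $ABC_{\gamma}$ and $\chi_{\gamma}$ cases are then handled analogously. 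To repair your write-up you would need to correct the $\theta_{2,4}$ values, replace the one-sign-change claim by either the two-sign-change count with both zeros identified or by the normalized-convexity argument, and actually carry out the $\gamma>1$ analysis for $ABC_{\gamma}$.
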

\begin{proof}
Firstly, let us prove the result for the general Randi\'{c} index $R_{\gamma}$.
In light of Lagrange's mean-value theorem, there exist numbers $c_{1},c_{2}$ such that $2<c_{1}<3<c_{2}<4$ and
\[\Theta_{1}=2\gamma3^{\gamma} c_{2}^{\gamma-1}\left[2-\left(\frac{c_{1}}{c_{2}}\right)^{\gamma-1}\right].\]
It can be easily seen that
\[\left(\frac{c_{1}}{c_{2}}\right)^{\gamma
-1}
\begin{cases}
\leq1  & \text{ if $\gamma\geq1$ ,}\\
<2^{1-\gamma}<2 & \text{ if  $0<\gamma<1$.}
\end{cases}\]
Hence, it follows that if $\gamma$ is positive then $\Theta_{1}>0$. Moreover, there exist numbers $c_{3},c_{4},c_{5}$ such that $3<c_{3}<4<c_{4}<6<c_{5}<8$ and
\[\frac{\Theta_{1}}{2}+\Theta_{3}
=\gamma2^{\gamma}\left(2c_{5}^{\gamma-1}-2c_{4}^{\gamma-1}+c_{3}^{\gamma-1}\right),\]
which is obviously positive for all $\gamma\geq1$. Note that the expression $2c_{5}^{\gamma-1}-2c_{4}^{\gamma-1}+c_{3}^{\gamma-1}$ can be rewritten as
$c_{5}^{\gamma-1}\left[2-\left(\frac{c_{4}}{c_{5}}\right)^{\gamma-1}\right]
+(c_{3}^{\gamma-1}-c_{4}^{\gamma-1})$ where $\left(\frac{c_{4}}{c_{5}}\right)^{\gamma-1}<2^{1-\gamma}<2$ and
$c_{3}^{\gamma-1}>c_{4}^{\gamma-1}$ for $0<\gamma<1$. Hence for $0<\gamma<1$, the quantity $\frac{\Theta_{1}}{2}+\Theta_{3}$ is again positive. Furthermore,
\[\frac{\Theta_{1}}{2}+\Theta_{2}
=\gamma c_{7}^{\gamma-1}\left[3-\left(\frac{c_{6}}{c_{7}}\right)^{\gamma-1}\right],\]
where $8<c_{6}<9<c_{7}<12$. Note that for $0<\gamma<1$, $\left(\frac{c_{6}}{c_{7}}\right)^{\gamma-1}<\left(\frac{12}{8}\right)^{1-\gamma}<3$ and for $\gamma\geq1$, $\left(\frac{c_{6}}{c_{7}}\right)^{\gamma-1}\leq1$. Hence $\frac{\Theta_{1}}{2}+\Theta_{2}$ is positive for $\gamma>0$. Therefore, from Theorem \ref{t22}, desired result follows.

The proofs of the remaining inequalities are fully analogous and hence we omit.

\end{proof}

\begin{rk}
Recently, An and Xiong \cite{An2} characterized the extremal polyomino chains for the general Randi\'{c} index $R_{\gamma}$ for $\gamma\geq1$. The first inequality of Corollary \ref{c111} can be considered as a generalized version of one given in \cite{An2}.
\end{rk}

The substitution $\theta_{a,b}=\left(\frac{ab}{a+b-2}\right)^{3}$ in Equation (\ref{z}), gives augmented Zagreb index $AZI$.

\begin{thm}\label{t3}
If $B_{n}$ is any polyomino chain with $n\geq3$ squares, then
$$AZI(L_{n}) \leq AZI(B_{n})$$
with equality if and only if $B_{n}\cong L_{n}$.
\end{thm}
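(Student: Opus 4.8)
The plan is to invoke Lemma~\ref{L1}, which reduces the claim to the inequality $\Psi_{AZI}(B_n)\ge\Psi_{AZI}(L_n)=\Theta_1$, with equality precisely when $B_n$ has a single segment. First I would record the constants attached to $\theta_{a,b}=\bigl(ab/(a+b-2)\bigr)^{3}$: from $\theta_{2,2}=\theta_{2,3}=\theta_{2,4}=8$, $\theta_{3,3}=(9/4)^{3}$, $\theta_{3,4}=(12/5)^{3}$, $\theta_{4,4}=(8/3)^{3}$ one computes $\Theta_1>0$ and $\Theta_4>0$, but $\Theta_2,\Theta_3<0$ and, worse, $\Theta_1+2\Theta_2<0$ and $\Theta_1+2\Theta_3<0$. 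Hence neither Theorem~\ref{t2} nor Theorem~\ref{t22} is available and one must argue more carefully. Since $\Theta_4>0$, identity (\ref{Eq.11}) lets us discard the $\tau$--term in a lower estimate, so it is enough to show that $\Theta_1 s+\Theta_2(\beta_1+\beta_s)+\Theta_3\sum_{i=1}^{s}\alpha_i\ge\Theta_1$.

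The next step uses the milder inequalities $\Theta_1+\Theta_2>0$ and $\Theta_1+\Theta_3>0$, which do hold for $AZI$. Splitting the estimate segment by segment,
\[
\Psi_{AZI}(B_n)\ \ge\ (\Theta_1+\Theta_2\beta_1)+(\Theta_1+\Theta_2\beta_s)+\sum_{i=2}^{s-1}(\Theta_1+\Theta_3\alpha_i),
\]
each summand is positive, and for $s\ge2$ the right-hand side is at least $2(\Theta_1+\Theta_2)+(s-2)(\Theta_1+\Theta_3)$, which is increasing in $s$ and, by a one-line numerical check, already surpasses $\Theta_1$ when $s\ge5$. This settles all polyomino chains with at least five segments. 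If $s\in\{2,3,4\}$ but $B_n$ still has a segment of length $\ge3$, then one of $\beta_1,\beta_s$ or one of the $\alpha_i$ is $0$, and the same estimate once more yields $\Psi_{AZI}(B_n)>\Theta_1$. What remains is the short list of chains all of whose (at most four) segments have length $2$, i.e.\ $Z_3$, $Z_4$ and $Z_5$.

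The last step, and the one I expect to be the real crux, is the direct verification for $Z_3,Z_4,Z_5$: in this range the positive linear growth $\Theta_1 s$ has not yet outweighed the accumulated negative terms $2\Theta_2$ and $\Theta_3\sum_i\alpha_i$, so the inequality $\Psi_{AZI}(B_n)\ge\Theta_1$ is not forced by any sign pattern of the $\Theta_i$ and has to be checked by evaluating $AZI$ directly --- for instance via the formulas $AZI(L_n)=2\theta_{2,2}+4\theta_{2,3}+(3n-5)\theta_{3,3}$ and $AZI(Z_n)=2\theta_{2,2}+4\theta_{2,3}+(2n-4)\theta_{2,4}+2\theta_{3,4}+(n-3)\theta_{4,4}$ from the corollary following Theorem~\ref{t1}. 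The whole statement hinges on the precise arithmetic of the weights $(ab/(a+b-2))^{3}$ in exactly these few small cases, and that is where I would concentrate the effort and check the numbers with particular care.
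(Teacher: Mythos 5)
Your structural reduction is fine, and indeed sharper than the paper's: with $\Theta_1\approx2.9523$, $\Theta_2\approx-2.4334$, $\Theta_3\approx-2.1612$, $\Theta_4\approx2.7056$ one has $\Theta_1+\Theta_2>0$ and $\Theta_1+\Theta_3>0$, so every chain with $s\ge5$ segments, and every chain with $s\in\{2,3,4\}$ containing a segment of length at least $3$, satisfies $\Psi_{AZI}(B_n)>\Theta_1=\Psi_{AZI}(L_n)$, exactly as you argue. The genuine gap is the step you deferred: the ``one-line numerical check'' for the residual cases $Z_3,Z_4,Z_5$ fails. From the corollary to Theorem~\ref{t1},
\[
AZI(L_3)=2\theta_{2,2}+4\theta_{2,3}+4\theta_{3,3}=16+32+4\left(\tfrac{9}{4}\right)^{3}=93.5625,
\]
while
\[
AZI(Z_3)=2\theta_{2,2}+4\theta_{2,3}+2\theta_{2,4}+2\theta_{3,4}=16+32+16+2\left(\tfrac{12}{5}\right)^{3}=91.648,
\]
so $AZI(Z_3)<AZI(L_3)$; equivalently $\Psi_{AZI}(Z_3)=2(\Theta_1+\Theta_2)\approx1.04<\Theta_1\approx2.95$. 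More generally $\Psi_{AZI}(Z_n)-\Psi_{AZI}(L_n)=(n-3)(\Theta_1+\Theta_3)+(\Theta_1+2\Theta_2)\approx0.7911\,(n-3)-1.9145$, which is negative for $n=3,4,5$ and becomes positive only for $n\ge6$. So the three cases you isolated are not a formality to be checked: they are counterexamples, and no completion of your argument can establish the statement as printed for $n\in\{3,4,5\}$.

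For comparison, the paper's proof follows the same outline as yours on the chains having a segment of length at least $3$, but disposes of the zigzag case by asserting without computation that $\Psi_{AZI}(Z_n)>\Psi_{AZI}(L_n)$ for all $n\ge3$; that assertion is precisely what your deferred check refutes for $n\le5$. The inequality $AZI(L_n)\leq AZI(B_n)$, and your argument for it, go through verbatim once the hypothesis is strengthened to $n\ge6$ (or the chains $Z_3,Z_4,Z_5$ are excluded); as written, both your proposal and the paper's proof break at the same point, yours by omission and the paper's by an erroneous claim.
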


\begin{proof}
Suppose that $B_{n}$ has $s$ segments $S_{1}, S_{2},S_{3},...,S_{s}$ with the length vector $l=(l_{1},l_{2},...,l_{s})$ and structural vector $\tau=(\tau_{1},\tau_{2},...,\tau_{s})$. Straightforward computations yield \[\Theta_{1}\approx2.9523,\Theta_{2}\approx-2.4334,\Theta_{3}\approx-2.1612,\Theta_{4}\approx2.7056.\]
Firstly, we prove the lower bound. It can be easily verified that $\Psi_{TI}(Z_{n})>\Psi_{AZI}(L_{n})$ and hence we take $B_{n}\not\cong Z_{n}$. Let $s\geq2$ then by definition of $\Psi_{AZI}$, the quantities $\Psi_{AZI}(S_{1})+\Psi_{AZI}(S_{s})$ and $\Psi_{AZI}(S_{i})$ (where $2\leq i\leq s-1$ if $s\geq3$) must be positive. If at least one external segment has length greater than 2, then
\[\Psi_{AZI}(S_{1})+\Psi_{AZI}(S_{s})=2\Theta_{1}+\Theta_{2}(\beta_{1}+\beta_{s})>\Theta_{1}.\]
If some internal segment has length greater than 2, say $l_{i}\geq3$ where $2\leq i\leq s-1$ and $s\geq3$, then $\Psi_{AZI}(S_{i})\geq\Theta_{1}.$
In both cases,
\[\Psi_{AZI}(B_{n})=\sum_{i=1}^{s}\Psi_{AZI}(S_{i})>\Theta_{1}=\Psi_{AZI}(L_{n}).\]
By virtue of Lemma \ref{L1}, $AZI(L_{n})\leq AZI(B_{n})$ with equality if and only if $B_{n}\cong L_{n}$.
\end{proof}

At this time, the problem of finding polyomino chain with maximum AZI value over
the class of all polyomino chains with fixed number of squares seems to be difficult and we leave it for future
work. However, here we prove some structural properties of the polyomino chain
having maximum AZI value.

\begin{thm}\label{t3}
If $B_{n}^{+}$ is the polyomino chain with $n\geq6$ squares and maximum AZI value. Then the following properties hold.\\ \\
\textit{(1).} Every segment of $B_{n}^{+}$ has length less than 4 (and consequently $B_{n}^{+}$ has at least 3 segments).\\ \\
\textit{(2).} No two segments of $B_{n}^{+}$ with lengths 2 are consecutive.\\ \\
\textit{(3).} If at least one external segment of $B_{n}^{+}$ has length 2, then no two internal segments with lengths 3 are consecutive.\\ \\
\textit{(4).} If an external segment of $B_{n}^{+}$ has length 3, then its adjacent segment has also length 3.
\end{thm}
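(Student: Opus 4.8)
The plan is to reduce everything, via Lemma~\ref{L1}, to maximizing
\[\Psi_{AZI}(B_n)=\Theta_1 s+\Theta_2(\beta_1+\beta_s)+\Theta_3\sum_{i=1}^{s}\alpha_i+\Theta_4\sum_{i=1}^{s}\tau_i\]
over polyomino chains with $n$ squares, where the numbers $\Theta_1\approx2.9523$, $\Theta_2\approx-2.4334$, $\Theta_3\approx-2.1612$, $\Theta_4\approx2.7056$ are those recorded just above; each part is then obtained by a local-modification argument, exhibiting, if the property failed, a chain $B'$ on $n$ squares with $\Psi_{AZI}(B')>\Psi_{AZI}(B_n^{+})$, which contradicts Lemma~\ref{L1}. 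I would first isolate two ingredients. \emph{(i) Elementary sign facts}: $\Theta_1+\Theta_2>0$, $\Theta_1+\Theta_3>0$, $\Theta_1+2\Theta_3<0$, $\Theta_3-\Theta_2>0$ and $\Theta_2-\Theta_3+\Theta_4>0$, all read off from the above values. \emph{(ii) A geometric lemma}: no two consecutive internal length-$3$ segments can both have $\tau=1$. Indeed one first checks that an internal length-$3$ segment has $\tau=1$ exactly when its two neighbouring segments attach to it on the same side; were $S_i,S_{i+1}$ such a pair, the squares of $S_{i-1},S_i,S_{i+1},S_{i+2}$ would ring an empty unit cell, creating an interior face that is not a square -- impossible in a polyomino chain. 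Hence along any maximal run of consecutive internal length-$3$ segments the indices with $\tau=1$ form an independent set in a path. Throughout, a surgery that only changes the lengths of some segments never alters which side a neighbour attaches on, so the contributions of all untouched segments are preserved once the tail of the chain is redrawn in a suitable orientation (of the two available, at least one avoids self-overlap).

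For (1): if some segment $S_j$ has length $\ge4$, split it into a length-$2$ segment followed by a length-$(l_j-1)\ge3$ one (fixing $n$, raising $s$ by one); $\Psi_{AZI}$ changes by $\Theta_1+\Theta_2+\Theta_4\tau'\ge\Theta_1+\Theta_2>0$ ($S_j$ external) or $\Theta_1+\Theta_3+\Theta_4\tau'\ge\Theta_1+\Theta_3>0$ ($S_j$ internal), where $\tau'\in\{0,1\}$ belongs to the new length-$(l_j-1)$ segment. The assertion $s\ge3$ then follows from $n=\sum l_i-s+1\le2s+1$ together with $n\ge6$. For (2): if two consecutive segments have length $2$, merge them into one length-$3$ segment (fixing $n$, lowering $s$ by one); $\Psi_{AZI}$ changes by $-\Theta_1-2\Theta_3+\Theta_4\tau'\ge-\Theta_1-2\Theta_3>0$ (both internal) or $-\Theta_1-\Theta_2-\Theta_3>0$ (one external, using $s\ge3$). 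For (3): if, say, $S_1$ has length $2$ and $S_j,S_{j+1}$ are consecutive internal length-$3$ segments, then by the geometric lemma one of $\tau_j,\tau_{j+1}$ is $0$; transferring one square from that segment to $S_1$ -- so that $(2,\dots,3,3,\dots)$ becomes $(3,\dots,2,3,\dots)$ or $(3,\dots,3,2,\dots)$, with $n,s$ and all other $\tau$'s unchanged -- alters $\Psi_{AZI}$ by $\Theta_3-\Theta_2>0$; the case $l_s=2$ is symmetric.

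For (4): suppose $S_1$ has length $3$ and $S_2$ has length $2$; by (1)--(2) then $S_3$ has length $3$. If $\tau_3=0$, move one square from $S_1$ to $S_2$ to reach length vector $(2,3,3,\dots)$ and orient the new $S_2$ so that $\tau_2=1$ (allowed by the geometric lemma, keeping $\tau_3=0$); $\Psi_{AZI}$ then increases by $\Theta_2-\Theta_3+\Theta_4>0$. The residual case $\tau_3=1$ is where I expect the real difficulty: now that single transfer gains only $\Theta_2-\Theta_3<0$, because by the geometric lemma making $\tau_2=1$ forces $\tau_3=0$. The intended way out is to slide the length-$2$ segment one step inward, from $(3,2,3,\dots)$ to $(3,3,2,\dots)$: this leaves the first three segments' total contribution unchanged, but, the former $S_3$ now being of length $2$, the constraint that pinned $\tau_4=0$ is lifted; an independent-set count along the resulting maximal run of internal length-$3$ segments (governed by the geometric lemma) shows the $\tau$-pattern can then be re-optimised to recover at least one more $\tau=1$, i.e.\ at least $\Theta_4>0$, contradicting maximality. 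For $n=6$ this collapses to the direct comparison $\Psi_{AZI}(3,3,2)-\Psi_{AZI}(3,2,3)=\Theta_2-\Theta_3+\Theta_4>0$.

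The main obstacle throughout is the control of the $\tau$-terms: since $\Theta_4$ is large and positive, a surgery that destroys a $\tau=1$ can be self-defeating, which is exactly why the geometric no-hole lemma is the workhorse; and in part (4) with $\tau_3=1$ the delicate point is the parity/independent-set bookkeeping along runs of length-$3$ segments, together with the routine but necessary check that each rearrangement used is realised by an honest, overlap-free polyomino chain.
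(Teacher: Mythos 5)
Your reduction via Lemma \ref{L1}, the sign facts, and the surgeries you use for parts \textit{(1)}--\textit{(3)} are sound and essentially the paper's own argument: the paper splits a long segment into lengths $3$ and $l_j-2$ where you split into $2$ and $l_j-1$, and in part \textit{(3)} it interchanges $S_1$ with the $\tau=0$ segment rather than transferring one square, but the resulting gains ($\Theta_1+\Theta_2$, $\Theta_1+\Theta_3$, $-\Theta_1-2\Theta_3$, $-(\Theta_1+\Theta_2+\Theta_3)$, $\Theta_3-\Theta_2$) are the same. Your explicit ``no-hole'' lemma (two consecutive internal length-$3$ segments cannot both have $\tau=1$) is a welcome justification of a fact the paper uses silently, and your derivation of $s\geq3$ from $n\leq 2s+1$ is correct.

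The gap is in part \textit{(4)}, exactly in the residual case you flag. Your rescue — slide $(3,2,3,\dots)$ to $(3,3,2,\dots)$ with the new $\tau_2=1$ and then ``re-optimise'' the $\tau$-pattern along the run of internal length-$3$ segments to recover an extra $\tau=1$ — does not work when that run has odd length and already carries the alternating pattern $1,0,1,\dots$ starting at $S_3$. The simplest instance is $l_4=2$: before the slide the first three segments contribute $\Theta_3+\Theta_4$, after the slide they contribute $\Theta_4+\Theta_3$, and since $S_4$ has length $2$ there is no $\tau_4$ to liberate, so $\Psi_{AZI}$ is unchanged and no contradiction is reached (e.g.\ $(3,2,3,2,3)$ with $\tau_3=1$ is untouched by your move). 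In general, a run $S_3,\dots,S_k$ of odd length $m$ with $\lceil m/2\rceil$ ones admits, after the slide, at most $1+\lceil (m-1)/2\rceil=\lceil m/2\rceil$ ones, so the independent-set count gives no strict gain; closing this would require chaining further surgeries (merging the newly adjacent length-$2$ segments, or propagating the slide toward the far end). The paper avoids the $\tau$-bookkeeping entirely by looking at the other external segment: with $l_1=3$, $l_2=2$, if $l_t=2$ it deletes $S_1$ and lengthens both external segments by one square, gaining $-(\Theta_1+\Theta_2+\Theta_3)>0$; if $l_t=3$ it prepends a new external length-$2$ segment, turning the old $S_1$ into an internal length-$3$ segment with $\tau=1$, and shortens $S_t$ to length $2$, gaining $\Theta_1+2\Theta_2+\Theta_4>0$. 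These two moves settle all cases irrespective of the interior $\tau$-pattern, which is what your sketch is missing.
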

\begin{proof}
Bearing in mind the Lemma \ref{L1}, one can say that $\Psi_{AZI}(B_{n}^{+})$ is maximum. Let $B_{n}^{+}$ has $t$ segments $S_{1}^{+}, S_{2}^{+},...,S_{t}^{+}$ with length vector $(l_{1}^{+},l_{2}^{+},...,l_{t}^{+})$ and structural vector $(\tau_{1}^{+},\tau_{2}^{+},...,\tau_{t}^{+})$ where $l_{i}^{+}=l(S_{i}^{+})$ and $\tau_{i}^{+}=\tau(S_{i}^{+})$ for $1\leq i\leq t$. Recall that \[\Theta_{1}\approx2.9523,\Theta_{2}\approx-2.4334,\Theta_{3}\approx-2.1612,\Theta_{4}\approx2.7056.\]
\textit{Proof of Part 1.} Suppose to the contrary that for some $j$ (where $1\leq j\leq t$), the length $l_{j}^{+}$ is greater than 3. If $2\leq j\leq t$, then let us assume that $B_{n}^{(1)}$ be the polyomino chain with length vector $(l_{1}^{+},l_{2}^{+},...,l_{j-1}^{+},3,l_{j}^{+}-2,l_{j+1}^{+}...,l_{t}^{+})$ and structural vector $(\tau_{1}^{+},\tau_{2}^{+},...,\tau_{j-1}^{+},0,0,\tau_{j+1}^{+},...,\tau_{t}^{+})$. Then
\[\Psi_{AZI}(B_{n}^{(1)})-\Psi_{AZI}(B_{n}^{+})=
\begin{cases}
\Theta_{1}+x_{1}\Theta_{2}>0  & \text{if $j=t$},\\
\Theta_{1}+x_{1}\Theta_{3}>0  & \text{otherwise},
\end{cases} \]
where $x_{1}=0$ or 1. This is a contradiction to the maximality of of $\Psi_{AZI}(B_{n}^{+})$. If $j=1$, then for the polyomino chain
$B_{n}^{(2)}$ having length vector $(3,l_{1}^{+}-2,l_{2}^{+},l_{3}^{+},...,l_{t}^{+})$ and structural vector $(0,0,\tau_{2}^{+},\tau_{3}^{+},...,\tau_{t}^{+})$, one have $\Psi_{AZI}(B_{n}^{(2)})-\Psi_{AZI}(B_{n}^{+})>0$, again a contradiction.\\
\textit{Proof of Part 2.} Contrarily assume that $l_{j}^{+}=l_{j+1}^{+}=2$ for some $j$ (where $1\leq j\leq t-1$ and $t\geq3$). Let $B_{n}^{(3)}$ be the polyomino chain obtained from $B_{n}^{+}$ by replacing the segments $S_{j}^{+},S_{j+1}^{+}$ with one having length 3. Then
\[\Psi_{AZI}(B_{n}^{(3)})-\Psi_{AZI}(B_{n}^{+})=
\begin{cases}
x_{2}\Theta_{4}-\Theta_{1}-2\Theta_{3} & \text{if both $S_{j}^{+},S_{j+1}^{+}$ are internal},\\
-(\Theta_{1}+\Theta_{2}+\Theta_{3}) & \text{otherwise},
\end{cases}\]
where $x_{2}=0$ or 1. In all the cases the quantity $\Psi_{AZI}(B_{n}^{(3)})-\Psi_{AZI}(B_{n}^{+})$ is positive and hence a contradiction is obtained.\\
\textit{Proof of Part 3.} Let us suppose, to the contrary, that $l_{j}^{+}=l_{j+1}^{+}=3$ for some $j$ (where $2\leq j\leq t-2$ and $t\geq4$). Since at least one of $\tau_{j}^{+},\tau_{j+1}^{+}$ is 0, without loss of generality assume that $\tau_{j}^{+}=0$. Since at least one of $l_{1}^{+},l_{t}^{+}$ is 2. Without loss of generality, we suppose that $l_{1}^{+}=2$. Let $B_{n}^{(4)}$ be the polyomino chain obtained from $B_{n}^{+}$ by interchanging the segments $S_{1}^{+}$ and $S_{j}^{+}$. Then
\[\Psi_{AZI}(B_{n}^{(4)})-\Psi_{AZI}(B_{n}^{+})=\Theta_{3}-\Theta_{2}>0,\]
which is a contradiction to the maximality of of $\Psi_{AZI}(B_{n}^{+})$.\\
\textit{Proof of Part 4.} We consider two cases:

\textit{Case 1.} If $l_{1}^{+}=3$. Contrarily suppose that $l_{2}^{+}\neq3$. Then by virtue of Part 1, $l_{2}^{+}=2$ and $l_{t}^{+}\leq3$. Here we have two subcases:

\textit{Subcase 1.1.} If $l_{t}^{+}=2$. Then for the polyomino chain
$B_{n}^{(6)}$ having length vector $(l_{2}^{+}+1,l_{3}^{+},l_{4}^{+},...,l_{t-1}^{+},l_{t}^{+}+1)$ and structural vector $(0,\tau_{3}^{+},\tau_{4}^{+},...,\tau_{t-1}^{+},0)$, one have
\[\Psi_{AZI}(B_{n}^{(6)})-\Psi_{AZI}(B_{n}^{+})=-(\Theta_{1}+\Theta_{2}+\Theta_{3})>0, \text{ a contradiction.}\]

\textit{Subcase 1.2.} If $l_{t}^{+}=3$. Let $B_{n}^{(7)}$ be the polyomino chain
with the length vector $(2,l_{1}^{+},l_{2}^{+},l_{3}^{+},...,l_{t-1}^{+},l_{t}^{+}-1)$ and structural vector $(0,1,0,\tau_{3}^{+},\tau_{4}^{+},...,\tau_{t-1}^{+},0)$. Then
\[\Psi_{AZI}(B_{n}^{(7)})-\Psi_{AZI}(B_{n}^{+})=\Theta_{1}+2\Theta_{2}+\Theta_{4}>0, \text{ again a contradiction.}\]

\textit{Case 2.} If $l_{t}^{+}=3$. Then we have to show that $l_{t-1}^{+}=3$. Using the same technique as adopted in the Case 1, one can easily prove the desired conclusion.\\

\end{proof}

If we replace $\theta_{a,b}$ with $\sqrt{\frac{a+b-2}{ab}}$ in Equation (\ref{z}), then $TI$ corresponds to the atom-bond connectivity index $ABC$.

\begin{thm}\label{t4}
If $B_{n}$ is any polyomino chain with $n\geq3$ squares, then
\[ ABC(B_{n})\leq ABC(Z_{n}),\]
with equality if and only if $B_{n}\cong Z_{n}$.
\end{thm}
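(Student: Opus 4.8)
The plan is to reduce, via Lemma~\ref{L1}, to showing that $\Psi_{ABC}(B_n)\le\Psi_{ABC}(Z_n)$ for every polyomino chain $B_n$ with $n\ge3$ squares, with equality only when $B_n\cong Z_n$. The first step is the (short) numerical evaluation of $\Theta_1,\Theta_2,\Theta_3,\Theta_4$ for $\theta_{a,b}=\sqrt{(a+b-2)/(ab)}$, which gives
\[\Theta_1\approx-0.0038,\qquad\Theta_2\approx0.0212,\qquad\Theta_3\approx0.0304,\qquad\Theta_4\approx-0.0120.\]
The features that actually get used are $\Theta_4<0$, $\Theta_1+\Theta_3>0$ and $\Theta_1+2\Theta_2>0$. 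Note that $\Theta_1$ itself is negative while $\Theta_1+2\Theta_2>0$, so neither branch of Theorem~\ref{t22} applies and a direct argument is unavoidable.

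For the second step I would bound $\Psi_{ABC}(B_n)$ directly from the expression in~(\ref{Eq.11}). If $B_n$ has $s\ge2$ segments with length vector $(l_1,\dots,l_s)$, then applying $\beta_1+\beta_s\le2$ (with $\Theta_2>0$), $\sum_{i=1}^{s}\tau_i\ge0$ (with $\Theta_4<0$), and $\sum_{i=1}^{s}\alpha_i\le s-2$ (there being exactly $s-2$ internal segments) yields
\[\Psi_{ABC}(B_n)\ \le\ (\Theta_1+\Theta_3)\,s+2\Theta_2-2\Theta_3 .\]
Since each $l_i\ge2$ and $\sum_{i=1}^{s}(l_i-1)=n-1$, we have $s\le n-1$; and because $\Theta_1+\Theta_3>0$ the right-hand side is strictly increasing in $s$, hence it is at most $(\Theta_1+\Theta_3)(n-1)+2\Theta_2-2\Theta_3$. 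A direct evaluation identifies this last quantity with $\Psi_{ABC}(Z_n)$, using that $Z_n$ has $n-1$ segments all of length two, so $\sum\tau_i=0$, $\sum\alpha_i=n-3$ and $\beta_1+\beta_s=2$. The remaining possibility $s=1$, i.e. $B_n\cong L_n$, is handled at once: $\Psi_{ABC}(L_n)=\Theta_1$ and $\Psi_{ABC}(Z_n)-\Theta_1=(\Theta_1+\Theta_3)(n-3)+(\Theta_1+2\Theta_2)>0$ for $n\ge3$.

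The third step is the equality discussion. Equality in the displayed chain forces $s=n-1$ (strict monotonicity in $s$), and $s=n-1$ together with $\sum_{i=1}^{s}(l_i-1)=n-1$ and each $l_i\ge2$ forces $l_i=2$ for all $i$, i.e. $B_n\cong Z_n$; the term-by-term equalities $\beta_1+\beta_s=2$, $\sum\alpha_i=s-2$, $\sum\tau_i=0$ are then automatic, and $Z_n$ does attain the bound, so by Lemma~\ref{L1} we get $ABC(B_n)\le ABC(Z_n)$ with equality if and only if $B_n\cong Z_n$.

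The only genuinely delicate point is the sign bookkeeping at the outset: $\Theta_1$ turns out negative but extremely small in absolute value, so one must compute carefully and cannot lean on Theorem~\ref{t22}; once the signs of $\Theta_4$, $\Theta_1+\Theta_3$ and $\Theta_1+2\Theta_2$ are pinned down, everything else is routine. Alternatively, the theorem can be proved by the transformation technique of Theorem~\ref{t22}, repeatedly replacing a segment of length $\ge3$ by a length-two segment followed by a shorter one and checking that $\Psi_{ABC}$ strictly increases each time; this matches the paper's style more closely but relies on exactly the same inequalities.
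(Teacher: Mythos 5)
Your proposal is correct, and it takes a genuinely different route from the paper. The paper argues by contradiction on a maximizer $B_{n}^{'}$ of $\Psi_{ABC}$: using $\Theta_{4}<0$ it first resets every $\tau_{j}$ to $0$, and then repeatedly splits any segment of length at least $3$ into a length-two segment plus a shorter one, checking that each local transformation strictly increases $\Psi_{ABC}$, so that the maximizer must be $Z_{n}$; this is the same exchange technique used in Theorem \ref{t22} and Theorem \ref{t3}. You instead bound $\Psi_{ABC}(B_{n})$ globally from (\ref{Eq.11}): with $\Theta_{2},\Theta_{3}>0$, $\Theta_{4}<0$, $\beta_{1}+\beta_{s}\leq 2$, $\sum_{i}\alpha_{i}\leq s-2$ and $\sum_{i}\tau_{i}\geq 0$ you get $\Psi_{ABC}(B_{n})\leq(\Theta_{1}+\Theta_{3})s+2\Theta_{2}-2\Theta_{3}$ for $s\geq 2$, and then $s\leq n-1$ together with $\Theta_{1}+\Theta_{3}>0$ pins the maximum at $s=n-1$, which you correctly identify with $\Psi_{ABC}(Z_{n})$; the equality case is immediate because $s=n-1$ already forces $B_{n}\cong Z_{n}$ (the paper's fact (2)), and the $s=1$ case is a one-line computation using $\Theta_{1}+2\Theta_{2}>0$. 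Your numerical values of $\Theta_{1},\dots,\Theta_{4}$ agree with the paper's, and your observation that neither part of Theorem \ref{t22} applies (since $\Theta_{1}<0<\Theta_{1}+2\Theta_{2}$) is exactly why a separate argument is needed. What each approach buys: yours is shorter, avoids constructing the modified chains $B_{n}^{(1)},B_{n}^{(2)}$ (and in particular avoids the slightly delicate step of altering the structural vector $\tau$ while keeping the length vector fixed), and gives an explicit quantitative gap $(\Theta_{1}+\Theta_{3})\bigl((n-1)-s\bigr)$ in terms of the number of segments; the paper's transformation argument is less tied to the specific sign pattern and is the template that also yields the structural results of Theorem \ref{t3}, where no closed-form maximizer is available. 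One cosmetic point: in your opening sentence you list only $\Theta_{4}<0$, $\Theta_{1}+\Theta_{3}>0$ and $\Theta_{1}+2\Theta_{2}>0$ as the facts used, but your main estimate also uses $\Theta_{2}>0$ and $\Theta_{3}>0$ individually (for the $\beta$ and $\alpha$ terms), so these should be added to the list.
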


\begin{proof}
Suppose that for the polyomino chain $B_{n}^{'}$, $\Psi_{ABC}(B_{n}^{'})$ is maximum. Let $B_{n}^{'}$ has $s$ segments $S_{1}, S_{2},...,S_{s}$ with length vector $(l_{1},l_{2},...,l_{s})$ and structural vector $(\tau_{1},\tau_{2},...,\tau_{s})$. Simple calculations show that
\[\Theta_{1}\approx-0.0038,\Theta_{2}\approx0.0211,\Theta_{3}\approx0.0303,\Theta_{4}\approx-0.012,\]
and hence $\Psi_{ABC}(Z_{n})>\Psi_{ABC}(L_{n})$, which means that $s$ must be greater than 1. If for some internal segment $S_{j}$, $\tau_{j}\neq0$ where $2\leq j\leq s-1$ and $s\geq3$. Then for the polyomino chain $B_{n}^{(1)}$ having length vector $(l_{1},l_{2},...,l_{s})$ and structural vector $(\tau_{1},\tau_{2},...,,\tau_{j-1},0,\tau_{j+1},...,\tau_{s})$, one have
$\Psi_{ABC}(B_{n}^{(1)})>\Psi_{ABC}(B_{n}^{'})$, a contradiction to the definition of $B_{n}^{'}$. Hence $\tau_{1}=\tau_{2}=...=\tau_{s}=0$. If $l_{j}\geq3$ for some $j$ where $1\leq j\leq s$. Then for the polyomino chain $B_{n}^{(2)}$ having length vector
\[
\begin{cases}
(2,l_{1}-1,l_{2},l_{3},...,l_{s}),  & \text{if $j=1$},\\
(l_{1},l_{2},...,l_{j-1},2,l_{j}-1,l_{j+1},...,l_{s})  & \text{if $s\geq3$ and $2\leq j\leq s-1$},\\
(l_{1},l_{2},...,l_{s-2},l_{s-1},2,l_{s}-1)  & \text{if $j=s$},
\end{cases}\]
and structural vector $\tau=(0,0,...,0)$,
one have $\Psi_{ABC}(B_{n}^{(2)})>\Psi_{ABC}(B_{n}^{'})$ which is again a contradiction. Therefore, $B_{n}^{'}\cong Z_{n}$ and hence by Lemma \ref{L1}, the desired result follows.

\end{proof}

\section{Concluding Remarks}

We have established a general formula, given in Theorem \ref{t1}, for evaluating any BID index of polyomino chains. Then using this formula, we have derived some extremal results for BID indices of polyomino chains and hence obtained some corollaries for several well known BID indices: first and second Zagreb indices, first geometric-arithmetic index, Randi$\acute{c}$ index, sum-connectivity index, harmonic index, multiplicative sum Zagreb index, second multiplicative Zagreb index, general Randi$\acute{c}$ index $R_{\gamma}$ (for $\gamma>0$), general atom-bond connectivity index $ABC_{\gamma}$ (for $\gamma>1$) and general sum-connectivity index $\chi_{\gamma}$ (for $\gamma>0$). Thereby, we have generalized all the results of \cite{An2,deng,z} and some of \cite{y2,r4}. Moreover, we have showed that the linear chain $L_{n}$ and zigzag chain $Z_{n}$ has the minimum augmented Zagreb index and maximum atom-bond connectivity index, respectively, over the collection of all polyomino chains with $n\geq3$ squares. However, till now, there are many open problems related to extremal polyomino chains with respect to BID indices. But the problems of finding polyomino chains having minimum atom-bond connectivity index and maximum augmented Zagreb index over the class of all polyomino chains with fixed number of squares, seems to be interesting.

\end{document}